\newif\ifdraft\draftfalse
\newif\ifcite\citefalse
\newif\ifblow\blowfalse
\ifcite\usepackage{showkeys}\else\usepackage[notcite,notref]{showkeys}\fi\fi
\newtheorem{proposition}[equation]{Proposition}
\newtheorem{theorem}[equation]{Theorem}
\newtheorem{lemma}[equation]{Lemma}
\theoremstyle{remark}
\theoremstyle{definition}
\newtheorem{definition}[equation]{Definition}
\theoremstyle{remark}
\newtheorem{remark}[equation]{Remark}
\numberwithin{equation}{section}
\def\bc{\begin{cases}}
\def\ec{\end{cases}}
\def\ol{\overline}
\def\ul{\underline}
\def\a{\alpha}
\def\Bbb{\mathbb}
\def\t{\tilde}
\def\ca{{\mathcal A}}
\def\cj{{\mathcal J}}
\def\cl{{\mathcal L}}
\def\bc{{\mathbb C}}
\def\bn{{\mathbb N}}
\def\br{{\mathbb R}}
\def\bz{{\mathbb Z}}
\def\er{\eqref}
\def\bz{\mathbb Z}
\def\br{\mathbb R}
\def\bc{\mathbb C}
\def\lp2{L_pH_{2p}}
\def\bean{\begin{eqnarray}}
\def\eean{\end{eqnarray}}
\def\bea{\begin{eqnarray*}}
\def\eea{\end{eqnarray*}}
\def\beq{\begin{equation}}
\def\eeq{\end{equation}}
\def\beq*{\begin{equation*}}
\def\eeq*{\end{equation*}}
\def\bal{\begin{align*}}
\def\eal{\end{align*}}
\def\baln{\begin{align}}
\def\ealn{\end{align}}
\def\beg{\begin{gather*}}
\def\eng{\end{gather*}}
\def\bqu{\begin{question}}
\def\equ{\end{question}}
\def\ban{\begin{proof}[Answer]}
\def\ean{\end{proof}}
\def\p{\partial}
\def\on{\operatorname}
\def\bqu{\begin{question}}
\def\equ{\end{question}}
\def\0110{\begin{matrix} 0 & 1\\1&0\end{matrix}}
\def\t{\tilde}
\def\fg{\mathfrak{g}}
\def\fh{\mathfrak{h}}
\def\fl{\mathfrak{l}}
\def\fn{\mathfrak{n}}
\def\fo{\mathfrak{o}}
\def\fp{\mathfrak{p}}
\def\fs{\mathfrak{s}}
\def\ban{\begin{proof}[Answer]}
\def\ean{\end{proof}}
\def\ben{\begin{equation}}
\def\een{\end{equation}}
\def\j1{{(j+1)}}
\newcommand\ve{\varepsilon}
\newcommand{\hk}{\Big\lfloor\frac{k}{2}\Big\rfloor}
\newcommand{\bnu}{\boldsymbol \nu}
\begin{document}

\title[Toda systems of types $C$ and $B$ with singular sources]{Classification of solutions to Toda systems of types $C$ and $B$ with singular sources}

\author{Zhaohu Nie}
\email{zhaohu.nie@usu.edu}

\address{Department of Mathematics and Statistics, Utah State University, Logan, UT 84322-3900}



\begin{abstract}
In this paper, the classification in \cite{LWY} of solutions to Toda systems of type $A$ with singular sources is generalized to Toda systems of types $C$ and $B$. 
Like in the $A$ case, the solution space is shown to be parametrized by the abelian subgroup and a subgroup of the unipotent subgroup 
in the Iwasawa decomposition of the corresponding complex simple Lie group.
The method is by studying the Toda systems of types $C$ and $B$ as reductions of Toda systems of type $A$ with symmetries. 
The theories of Toda systems as integrable systems as developed in \cite{L, LS-book, N1, N2}, in particular the $W$-symmetries and the iterated integral solutions, play essential roles in this work, together with 
certain characterizing properties of minors of symplectic and orthogonal matrices. 
\end{abstract}

\subjclass[2000]{35J47, 35J91, 17B80}

\maketitle

\section{Introduction}

The Toda systems on the plane with singular sources that we will consider in this paper are the elliptic versions of the conformal Toda field theories. Toda filed theories are fundamental examples of integrable systems and have many applications in mathematics and physics. Toda field theories have been widely studied in the literature using various viewpoints and techniques, and we refer the reader to a very partial list of \cite{L, LS-book, Feher, N1, N2}. The elliptic Toda systems are closely related to the non-relativistic and relativistic non-abelian Chern-Simons gauge field theory \cite{Yang}. In particular, the results of this paper will be useful in constructing non-topological solutions to the relativistic Chern-Simons gauge field theory, following the works of \cite{CI, ALW} where the cases of $A_1$, $A_2$ and $C_2$ are treated. 

Let us introduce the systems that we will consider following the fundamental work \cite{LWY}. To each complex simple Lie algebra there is associated a Toda system. A complex simple Lie algebra $\fg$ of rank $n$ is classified by its Cartan matrix $(a_{ij})$, a rank-$n$ matrix with integer entries such that $a_{ii}=2$ and $a_{ij} \leq 0$ when $i\neq j$. 
The classification result of Killing and Cartan states that the complex simple Lie algebras come in four classical series called $A_n$, $B_n$, $C_n$ and $D_n$ with the respective simple Lie algebras being $\fs\fl_{n+1}$, $\fs\fo_{2n+1}$, $\fs\fp_{2n}$, $\fs\fo_{2n}$, together with five exceptional Lie algebras $G_2, F_4, E_{6,7,8}$. We refer the reader to, for example, \cite{FH,Knapp} for basic Lie theory. 

To each simple Lie algebra $\fg$ of rank $n$ with Cartan matrix $(a_{ij})_{i, j=1}^n$, we consider 
the following associated Toda system on the plane with singular sources at the origin and with finite integrals
\begin{equation}
\label{toda}
\begin{cases}
\displaystyle
\Delta u_i + 4\sum_{j=1}^n a_{ij} e^{u_j} = 4\pi \gamma_i \delta_0, & \gamma_i>-1,\\
\displaystyle
\int_{\br^2} e^{u_i} \,dx< \infty, & 1\leq i\leq n.
\end{cases}
\end{equation}
Here the $u_i$ are real-valued functions on the plane with coordinates $x = (x_1, x_2)$, $\Delta = \frac{\p^2}{\p x_1^2} + \frac{\p^2}{\p x_2^2}$, and the $\delta_0$ is the Dirac delta function at the origin. 
The work \cite{LWY} is the first to systematically study Toda systems with singular sources at the origin and with the finite integral conditions. Such a setup leads to the powerful classification result in \cite{LWY} and is useful in studying the non-topological solutions of the relativistic Chern-Simons gauge theory in \cite{ALW}. 

In solving \er{toda}, we will heavily use the complex coordinates $z=x_1+i\, x_2$ and $\bar z=x_1-i\, x_2$. For simplicity, we write $\p_z = \frac{\p}{\p z}= \frac{1}{2}(\frac{\p}{\p x_1} - i \frac{\p}{\p x_2})$, and similarly $\p_{\bar z} = \frac{\p}{\p \bar z}= \frac{1}{2}(\frac{\p}{\p x_1} + i \frac{\p}{\p x_2})$. The Laplace operator is then 
\begin{equation}\label{lap}
\Delta = 4\partial_z \partial_{\bar z}.
\end{equation}
The coefficient $4$ here is responsible for the slightly unconventional coefficient 4 on the left of \er{toda}. For more on this, see Remark \ref{coeff 4} at the end of this introduction. 


The local solutions of the systems \er{toda}, when viewed using \er{lap}, have been well studied in terms of integrable systems especially by the works of Leznov and Saveliev \cite{L, LS, LS-book}. Given $n$ arbitrary functions $\phi_i(z)$ of $z$ and $n$ arbitrary functions $\psi_i(\bar z)$ of $\bar z$, local complexed-valued solutions $u_1,\dots, u_n$ can be constructed using some representation theory of the simple Lie group $G$ corresponding to the Cartan matrix $(a_{ij})$. The requirements of global real solutions with finite integrals in \er{toda} will put on rigid restrictions on these arbitrary functions, and the difficulty of the classification lies in proving that all the solutions to \er{toda} are of certain forms.  

In the literature, general classification results of the global solutions to the Toda systems \er{toda} with finite integrals only exists for Lie algebras of type $A$, although \cite{ALW} and \cite{ALW2} have treated the $C_2$ and $G_2$ cases. Such classification results started without the singular sources, that is, when $\gamma_i = 0$. When the Lie algebra is $A_1 = \fs\fl_2$, the system \er{toda} becomes the ubiquitous Liouville equation, and Chen and Li \cite{CL} classified their finite-integral global solutions by the moving plane method following \cite{GNN}. Jost and Wang \cite{JW} extended the classification to the general $A_n$ cases without singular sources using some theorem from algebraic geometry about totally unramified curves. In the thorough work \cite{LWY}, Lin, Wei and Ye established the complete classification for general $A_n$ systems with singular sources together with the non-degeneracy properties for such solutions. 
One ingenious step in their proofs is to control the solutions by an ODE using the $W$-symmetries of the Toda systems, which are shown to take simple forms in the current setup. 
This author has studied the $W$-symmetries in \cite{N2} under the name of characteristic integrals. 

In this paper, we obtain the complete classification of the solutions to the system \er{toda} for Lie algebras of types $C$ and $B$. Our convention is that the Cartan matrices for $A_n$, $C_n$ and $B_n$ are respectively 
\begin{equation}
\label{Cartan-A}
A_n : \begin{pmatrix}
2 & -1 & & &\\
-1 & 2 & -1 & & \\
 & \ddots & \ddots & \ddots& \\
 & &-1 & 2 & -1\\
 & & & -1 & 2
 \end{pmatrix},
 \end{equation}
 \begin{equation}
 \label{Cartan-C}
 C_n : \begin{pmatrix}
2 & -1 & & &\\
-1 & 2 & -1 & & \\
 & \ddots & \ddots & \ddots& \\
 & &-1 & 2 & -1\\
 & & & -2 & 2
 \end{pmatrix},
 \end{equation}
 \begin{equation}
\label{Cartan-B}
B_n : \begin{pmatrix}
2 & -1 & & &\\
-1 & 2 & -1 & & \\
 & \ddots & \ddots & \ddots& \\
 & &-1 & 2 & -2\\
 & & & -1 & 2
 \end{pmatrix}.
 \end{equation}

First we cast the classification result of \cite{LWY} in the form that we would like to generalize, and we refer the reader to \cite{Knapp, Helgason} for the Iwasawa decomposition. 
\begin{theorem}[\cite{LWY}]\label{A in intro} For the Lie algebra $A_n$, the corresponding simple complex Lie group is $G = SL(n+1, \bc)$. Let $G=KAN$ be its Iwasawa decomposition, where $K=SU(n+1)$ is compact, $A = \br_+ ^n$ is abelian, and $N$ is the unipotent subgroup of unipotent lower-triangular matrices. 
The space of solutions to \er{toda} of type $A_n$ is parametrized by $AN_\Gamma$, where $N_\Gamma$ is a subgroup of $N$ determined by the set of $\gamma_i$. In particular, if all the $\gamma_i$ are integers, then $N_\Gamma  =N$ and the dimension of the solution space is the dimension of the space $AN$, which is $n^2+2n$. Furthermore there are concrete formulas for the solution associated to an element in the relevant space $AN_\Gamma$. 
\end{theorem}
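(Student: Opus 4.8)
The plan is to exploit the integrable-systems structure of the $A_n$ Toda system in the complex coordinates of \er{lap}, reduce the classification to a rigidity statement about a holomorphic curve into $\bcp^n$, and then read the parametrization off the Iwasawa decomposition. Away from the origin the system \er{toda} of type $A_n$ becomes $\partial_z\partial_{\bar z}u_i=-2e^{u_i}+e^{u_{i-1}}+e^{u_{i+1}}$ (with the convention $e^{u_0}=e^{u_{n+1}}=0$), which is exactly the compatibility condition for the osculating flag of a linearly full holomorphic curve $F=(f_0,\dots,f_n)\colon\Omega^*\to\bc^{n+1}$ on the punctured plane. Following Leznov--Saveliev \cite{L,LS-book}, I would represent the local real solution as
\begin{equation*}
u_i=\log\frac{\tau_{i-1}\,\tau_{i+1}}{\tau_i^2},\qquad \tau_k=\big\|F\wedge F'\wedge\cdots\wedge F^{(k-1)}\big\|^2,
\end{equation*}
where $\|\cdot\|$ is the Hermitian norm, $\tau_0=1$, and $\tau_{n+1}$ is the constant squared Wronskian once $F$ is normalized into $SL(n+1,\bc)$; the Toda equations are then equivalent to the infinitesimal Pl\"ucker formulas $\partial_z\partial_{\bar z}\log\tau_k=\tau_{k-1}\tau_{k+1}/\tau_k^2$. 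The remaining freedom in $F$ is precisely the action of $K=SU(n+1)$ by unitary change of frame, so solutions correspond to admissible curves modulo $K$, and it is this quotient that the Iwasawa decomposition $G=KAN$, via $G/K\cong AN$, turns into the parametrization by $AN$.

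Next I would pin down the local structure at the two ends. Since $\Delta\log|z|=2\pi\delta_0$, the singular source $4\pi\gamma_i\delta_0$ forces $u_i\sim2\gamma_i\log|z|$ near the origin, hence prescribed orders of vanishing of the $\tau_k$ and thus prescribed local exponents of $F$ at $0$ in terms of the $\gamma_i$; single-valuedness of the Hermitian norms $\tau_k$ (which follows from single-valuedness of the $u_i$ by the recursion coming from the Cartan matrix) constrains the monodromy of $F$ around $0$ to be unitary. The finite-integral condition $\int_{\br^2}e^{u_i}\,dx<\infty$ in turn forces sufficiently fast decay of $e^{u_i}$, hence prescribed exponents of $F$ at $\infty$ as well.

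The crux of the proof is the rigidity statement that the holomorphic data is rational with singularities confined to $\{0,\infty\}$. Here I would invoke the $W$-symmetries, studied by the author as characteristic integrals in \cite{N2}: the $W$-invariants built from $u_1,\dots,u_n$ are holomorphic in $z$, and the finite integral together with the mild logarithmic singularity at $0$ forces each of them to be a rational function with poles only at $0$ and $\infty$. Consequently the components of $F$ satisfy a common linear ODE of order $n+1$ that is Fuchsian with regular singular points only at $0$ and $\infty$ and with the exponents determined above. Ruling out transcendental solutions and reducing the coupled PDE system to a single ODE with just two singular points is the main obstacle, and it is exactly the step that the innovation of \cite{LWY} makes available.

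Finally I would solve this Euler-type equation: its solutions are monomials times polynomials, so after the $SU(n+1)$ normalization $F$ is a polynomial curve $\bcp^1\to\bcp^n$ whose coefficients are the free parameters of the solution. Matching the leading scaling data to the abelian factor $A=\br_+^n$ identifies that part of the data, while the admissible lower-order coefficients fill out a subgroup $N_\Gamma\subseteq N$: a unipotent direction is unobstructed precisely when the corresponding gap between prescribed exponents is a non-negative integer, a condition governed by partial sums of the $\gamma_i$. When every $\gamma_i\in\bz$ all these gaps are integral, so no direction is obstructed, $N_\Gamma=N$, and the real dimension of the solution space is $\dim_\br A+\dim_\br N=n+n(n+1)=n^2+2n$. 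Substituting the explicit polynomial $F$ into the displayed formula for $u_i$ then produces the concrete solution formulas asserted in the theorem.
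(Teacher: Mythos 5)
Your proposal is correct and follows essentially the same route as the paper: the crux—using holomorphy of the $W$-invariants (characteristic integrals) together with the growth estimates coming from the finite-integral condition and the mild singularity at $0$ to force a Cauchy--Euler ODE with regular singular points only at $0$ and $\infty$—is exactly the paper's proof of Proposition \ref{char exp}, and your parametrization of solutions modulo $SU(n+1)$ via $G/K\cong AN$ is the paper's positive-definiteness plus Cholesky decomposition $H=B^\dag B$, $B=\Lambda C$, in different clothing. Your cut-down from $N$ to $N_\Gamma$ by integrality of the exponent gaps likewise matches conditions \eqref{c=0} and \eqref{A cond}.
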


Our main theorem is that similar results continue to hold in the $C_n$ and $B_n$ cases, and it follows from Section \ref{red} and Propositions \ref{main}, \ref{make them}, and \ref{NGm}. 
\begin{theorem}\label{main intro}
\quad {\rm (i)} \quad
For the Lie algebra $C_n$, the corresponding simple complex Lie group is $G = Sp(2n, \bc)$, the group of symplectic matrices preserving $J_{2n}$ in \er{def j}.  
Let $G=KAN$ be its Iwasawa decomposition, where $K=Sp(2n)$ is compact, $A = \br_+ ^n$ is abelian, and $N$ is the unipotent subgroup of unipotent lower-triangular matrices in $Sp(2n, \bc)$. 
The space of solutions to \er{toda} of type $C_n$ is parametrized by $AN_\Gamma$, where $N_\Gamma$ is a subgroup of $N$ determined by the set of $\gamma_i$. In particular, if all the $\gamma_i$ are integers, then $N_\Gamma  =N$ and the dimension of the solution space is the dimension of  the space $AN$, which is $2n^2+n$. Furthermore there are concrete formulas for the solution associated to an element in the relevant space $AN_\Gamma$. 

\medskip
{\rm (ii)}\quad 
For the Lie algebra $B_n$, the corresponding simple complex Lie group is $G = SO(2n+1, \bc)$, the group of special orthogonal matrices preserving $J_{2n+1}$ in \er{def j}.  
Let $G=KAN$ be its Iwasawa decomposition, where $K=SO(2n+1)$ is compact, $A = \br_+ ^n$ is abelian, and $N$ is the unipotent subgroup of unipotent lower-triangular matrices in $SO(2n+1, \bc)$. 
The space of solutions to \er{toda} of type $B_n$ is parametrized by $AN_\Gamma$, where $N_\Gamma$ is a subgroup of $N$ determined by the set of $\gamma_i$. In particular, if all the $\gamma_i$ are integers, then $N_\Gamma  =N$ and the dimension of the solution space is the dimension of  the space $AN$, which is $2n^2+n$. Furthermore there are concrete formulas for the solution associated to an element in the relevant space $AN_\Gamma$. 
\end{theorem}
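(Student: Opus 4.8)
The plan is to deduce Theorem \ref{main intro} from the type-$A$ classification of Theorem \ref{A in intro} by realizing the $C_n$ and $B_n$ systems as symmetry reductions of type-$A$ systems. I would use the embeddings $Sp(2n,\bc)\subset SL(2n,\bc)$ and $SO(2n+1,\bc)\subset SL(2n+1,\bc)$ as fixed-point subgroups of involutions $\theta$ built from the form $J$ in \er{def j}; on the level of Dynkin diagrams these are the foldings $A_{2n-1}\to C_n$ and $A_{2n}\to B_n$ under $\sigma(i)=m+1-i$, with $m=2n-1$ resp. $m=2n$. The content of Section \ref{red} would then be: given $(u_1,\dots,u_n)$, define a $\sigma$-symmetric tuple $(U_1,\dots,U_m)$ by $U_i=u_i$ for $1\le i\le n$ and $U_{\sigma(i)}=U_i$, and check with the Cartan matrices \er{Cartan-A}--\er{Cartan-C}, \er{Cartan-B} that $(u_i)$ solves the $C_n$ (resp. $B_n$) system \er{toda} if and only if $(U_i)$ solves the $A_m$ system and is $\sigma$-symmetric, with singular data matching as $\Gamma_i=\Gamma_{\sigma(i)}=\gamma_i$. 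A short computation shows the doubling of the coupling across the folded node produces exactly the off-diagonal $-2$ of \er{Cartan-C}, \er{Cartan-B}; in the $B_n$ case, where the two central nodes of $A_{2n}$ lie in a single $\sigma$-orbit, the match requires one further constant shift of the short-root field (rescaling $e^{u_n}$), reflecting the non-admissibility of that folding.

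Granting this, the $C_n$/$B_n$ solution space is identified with the $\sigma$-symmetric locus of the $A_m$ solution space, which by Theorem \ref{A in intro} is parametrized by the Iwasawa factor $A^{SL}N^{SL}_\Gamma$ of $SL(m+1,\bc)$. The next step is to show that $\sigma$-symmetry of the solution is equivalent to its parameter lying in the $\theta$-fixed subgroup. Here the antidiagonal normalization of $J$ is chosen precisely so that the lower-triangular Borel of $SL(m+1,\bc)$ restricts to the one of the subgroup; consequently the Iwasawa decomposition can be taken $\theta$-stable, $\theta$ preserves $K$, $A$ and $N$, and the fixed points satisfy $(AN)^\theta=A^\theta N^\theta$, which is exactly the abelian-times-unipotent Iwasawa factor of $Sp(2n,\bc)$ resp. $SO(2n+1,\bc)$ (cf. \cite{Knapp, Helgason}). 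This yields the parametrization by $AN_\Gamma$ for the correct group together with the dimension count $2n^2+n$, and is the substance of Proposition \ref{main}.

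For the converse direction, Proposition \ref{make them} would construct the solution attached to a group element via the iterated-integral solutions and $W$-symmetries of \cite{L, LS-book, N1, N2}: from $\theta$-invariant holomorphic data one forms a frame and reads the $u_i$ off ratios of the associated $\tau$-functions, with reality, finiteness of $\int_{\br^2}e^{u_i}$, and the prescribed $\delta_0$-singularities all inherited from the type-$A$ construction. I expect the main obstacle to be the equivalence between the two pictures of the reduction, namely that the analytic symmetry $U_i=U_{\sigma(i)}$ of the solution holds if and only if the underlying frame genuinely lands in $Sp(2n,\bc)$ (resp. $SO(2n+1,\bc)$) rather than merely in $SL$. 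This is exactly the point at which the characterizing properties of minors of symplectic and orthogonal matrices enter: one must prove that a matrix preserves $J$ precisely when the relevant minors obey the complementary relations forcing $\tau_i=\tau_{\sigma(i)}$, and conversely, and then match these minor identities with the $\tau$-function symmetry of the Toda solution. Finally, Proposition \ref{NGm} would determine the subgroup $N_\Gamma\subset N$ cut out by the $\gamma_i$, by tracking which $\theta$-fixed unipotent directions are compatible with the admissible singular exponents and verifying closure under multiplication; assembling Section \ref{red} with Propositions \ref{main}, \ref{make them}, and \ref{NGm} then gives Theorem \ref{main intro}.
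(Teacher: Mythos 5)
Your proposal follows essentially the same route as the paper: the folding reduction of the $C_n$/$B_n$ systems to $\sigma$-symmetric $A_{2n-1}$/$A_{2n}$ systems with the $\ln 2$ shift at the short root (Section \ref{red}), the equivalence of the solution symmetry $U_i=U_{\sigma(i)}$ with the Hermitian parameter matrix lying in $Sp(2n,\bc)$ or $SO(2n+1,\bc)$ proved via the minor characterization (Proposition \ref{prop-minors}, used in the paper through the Wronskian identities of Proposition \ref{minors for W}), and the determination of $N_\Gamma$ as the $\gamma$-dependent fixed-point subgroup (Proposition \ref{NGm}). The only discrepancy is cosmetic: Proposition \ref{make them} is in fact the Cholesky step (that $H=B^\dag B$ symplectic/orthogonal forces $B=\Lambda C$ to be symplectic/orthogonal, i.e.\ your $(AN)^\theta=A^\theta N^\theta$ claim made concrete), not the construction of solutions from group elements, but your outline contains that step in substance.
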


The system \er{toda} has another version which is easier to use for many purposes. 
Let $u_i = \sum_{j=1}^n a_{ij} U_j$. Then the $U_i$ satisfy 
\begin{equation}
\label{toda2}
\begin{cases}
\displaystyle
\Delta U_{i} + 4\exp\Big(\sum_{j=1}^n  a_{ij} U_j\Big) = 4\pi \alpha_i \delta_0\\
\displaystyle
\int_{\br^2} e^{\sum_j a_{ij}U_j} \, dx< \infty, \qquad 1\leq i\leq n.
\end{cases}
\end{equation}
Here 
\begin{equation}\label{def alpha}
\alpha_i = \sum_{j=1}^n a^{ij} \gamma_j,
\end{equation}
where $(a^{ij})$ is the inverse matrix of $(a_{ij})$. 
By \er{lap}, the first equation in \er{toda2} is the same as 
\begin{equation}\label{U in zz}
U_{i,z \bar z} + \exp\Big(\sum_{j=1}^n a_{ij} U_j\Big) = \pi \alpha_i \delta_0,
\end{equation}
and it is this form that is usually called Toda field theory. 

We emphasize that the main tools in this paper come from the theory of the Toda systems as integrable systems as developed by Leznov and Saveliev \cite{L, LS, LS-book}, with some further development and clarification by this author \cite{N1, N2}. Using the structure of $W$-symmetries in \cite{N2}, we can derive some of the results in \cite{LWY} concerning Toda systems of type $A$ more conceptually and quickly. Furthermore the results in \cite{LWY} are best presented using the iterated integral viewpoint explicitly spelled out in \cite{L}. 
The results in Theorem \ref{main intro} are obtained using the structure theory of the solutions to Toda systems of types $C$ and $B$ in \cite{N1}. It is a well-known fact that the Toda systems of type $C_n$ and $B_n$ are reductions of those of types $A_{2n-1}$ and $A_{2n}$. Therefore the results in \cite{LWY} for type $A$ lead to results for type $C_n$ and $B_n$. However without the correct viewpoint, this reduction procedure becomes tedious and un-illuminating. In the literature, \cite{ALW,ALW2} worked out such reductions for the cases of $C_2$ and $G_2$ with long formulas. In this paper, we show that with the correct viewpoint, the results are still expressed using the nice structure of Lie groups. 

This paper is organized as follows. 
In Section \ref{revisit}, we clarify some key results of \cite{LWY} using the structure theory in \cite{N2}, and cast the results in \cite{LWY} in the framework using iterated integrals \cite{L}. In Section \ref{minors}, we establish some characterizing algebraic properties of minors of symplectic and orthogonal matrices. Here and throughout the paper, we are able to treat the symplectic and the orthogonal cases on the same footing by the bilinear form in \er{def j}. In Section \ref{red}, we spell out the $C_n$ and $B_n$ systems as reductions of $A_k$ systems with symmetries. Then finally in Section \ref{symmetry}, we show that the symmetry reduction requirement forces the group elements in the solutions to the $A_k$ system to be more special, that is, they are symplectic or orthogonal respectively. Finally in Section \ref{further}, we study the subgroup $N_\Gamma$ of $N$ determined by the $\gamma_i$ for the solutiosn to be well-defined on $\bc^*=\bc\backslash \{0\}$. In the appendices, we show the examples of $C_3$ and $B_2$ Toda systems to demonstrate our results. 

\begin{remark} Toda systems \er{toda} for Lie algebras of types $D, E$ and $F$ can't be studied along the line of this paper since they are not reductions of the type $A$. Their study will be pursued in a future work. 
\end{remark}

\begin{remark}\label{coeff 4} The coefficients $4$ on the left hand sides of \er{toda} and \er{toda2} come from \er{lap}. 
With this coefficient 4, we get the most convenient form of equation \er{U in zz} to which we will apply many tools from Toda field theories. This coefficient can be easily dealt with as follows. 

The solutions $u_i$ to \er{toda} and the solutions $\hat u_i$ to the more conventional equation in \cite{LWY},
$
\Delta \hat u_i + \sum_{j=1}^n a_{ij} e^{\hat u_j} = 4\pi \gamma_i \delta_0,
$
are related by 
$$
\hat u_i = u_i + \ln 4, \quad 1\leq i\leq n.
$$
Similarly the solutions $U_i$ to \er{toda2} and the solutions $\hat U_i$ to the more conventional equation in \cite{LWY},
$
\Delta \hat U_{i} + \exp\Big(\sum_{j=1}^n  a_{ij} \hat U_j\Big) = 4\pi \alpha_i \delta_0,
$
are related by 
$$
\hat U_i = U_i + (\ln 4) \sum_{j=1}^n a^{ij}, \quad 1\leq i\leq n.
$$

Our current choice of coefficients makes many formulas easier. For example, the coefficient $2^{-n(n+1)}$ in Eq. (1.11) of \cite{LWY} would be gone under our convention. 
\end{remark}

\medskip
\noindent{\bf Acknowledgment.} The author thanks Prof. Z.-Q. Wang for bringing his attention to the work of \cite{LWY} and the Chern Institute of Mathematics at Nankai University for a pleasant visit in May 2014 where this work was started. He also thanks Ming Xu for several helpful discussions. 

\section{The $A$ case revisited using Toda field theories}\label{revisit}

In this section, we revisit the fundamental classification result in \cite{LWY} for solutions to Toda systems \er{toda} with singular sources for Lie algebras of type $A$, that is, with Cartan matrix \er{Cartan-A}. We apply the theory of Toda systems as integrable systems to reformulate some of their proofs and results. 

For the reader's convenience, first we recall the classification result in \cite{LWY} for type $A$. For Toda systems \er{toda2} of types $A$, $C$, $B$, and $G_2$, one simplification is that $U_1$ decides all the other $U_i$ by the shapes of their Cartan matrices. 
\begin{theorem}[\cite{LWY}]\label{A case} For the system \er{toda2} of type $A_n$, the $U_1$ is expressed by 
\begin{equation}\label{form}
\begin{split}
e^{-U_1} = |z|^{-2\alpha_1} \Big( \lambda_0 + \sum_{i=1}^n \lambda_i |P_i(z)|^2 \Big), \quad \text{where}\\
P_i(z) = z^{\mu_1 + \cdots + \mu_i} + \sum_{j=0}^{i-1} c_{ij} z^{\mu_1 + \cdots + \mu_j},
\end{split}
\end{equation}
$\mu_i = 1 + \gamma_i >0$, $c_{ij}$ are complex numbers, and the $\lambda_i > 0$ satisfy
\begin{equation}\label{prod}
\lambda_0 \cdots \lambda_n = \prod_{1\leq i\leq j\leq n} \Big( \sum_{k=i}^j \mu_k \Big)^{-2}.
\end{equation}
Furthermore for $j<i$,
\begin{equation}\label{c=0}
c_{ij}=0 \quad \text{if }\mu_{j+1} + \cdots + \mu_i \notin \bn.
\end{equation}
\end{theorem}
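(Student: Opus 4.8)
The plan is to recover the explicit form \eqref{form} from the integrable-systems structure of \eqref{toda2}, rather than by moving planes or algebraic geometry; it suffices to treat $U_1$, since the shape of the $A_n$ Cartan matrix recovers the remaining $U_i$ from it. Away from the origin the type $A_n$ system is the zero-curvature condition of a Leznov--Saveliev linear problem, so its local solutions come from a holomorphic frame. I would first produce a representation $e^{-U_1} = \sum_{i=0}^n \lambda_i\,|Q_i(z)|^2$ in which the $Q_i$ are (a priori multivalued) holomorphic solutions of a scalar linear ODE of order $n+1$, and the positive constants $\lambda_i$ absorb the antiholomorphic data once reality of the $U_i$ is imposed; this is the $A_n$ specialization of the iterated-integral solution of \cite{L}. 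The coefficients of this ODE are the characteristic integrals of the system, and the common fractional behavior of the $Q_i$ at the origin is extracted later as the prefactor $|z|^{-2\alpha_1}$.

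The heart of the argument is to pin down that ODE. Following \cite{N2}, the $W$-symmetries of \eqref{toda2} furnish characteristic integrals $\mathcal{W}_2(z), \dots, \mathcal{W}_{n+1}(z)$ that are $\bar z$-independent, hence holomorphic on $\mathbb{C}^*$. I would then identify them from the two global hypotheses: the source $\alpha_i\delta_0$ fixes the indicial exponents of the $U_i$ at the origin, namely $\mu_i = 1+\gamma_i$, while the finite-integral condition forces the decay of $e^{u_i}$ at infinity. Together these bound the order of each $\mathcal{W}_k$ at $0$ and at $\infty$, so that $\mathcal{W}_k$ extends to a rational function on $\mathbb{CP}^1$ with a single pole, of controlled order, at the origin; this identifies the governing ODE as one of Euler type at $0$. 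This is precisely the step singled out in \cite{LWY}, now phrased through $W$-symmetries, and it is the main obstacle: upgrading the a priori merely local characteristic integrals to global algebraic data is where the finiteness hypothesis does its essential work, and the non-generic configurations of the $\mu_i$ need separate care.

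Solving the Euler-type equation gives the basis as $Q_i(z) = z^{-\alpha_1} P_i(z)$ with $P_0 \equiv 1$ and each $P_i$ of the stated shape: leading term $z^{\mu_1 + \cdots + \mu_i}$ together with monomial corrections $c_{ij}\,z^{\mu_1 + \cdots + \mu_j}$. Finiteness of the integral excludes any transcendental tail and forces each $P_i$ to be a polynomial of exactly these degrees. For the product constraint \eqref{prod} I would compute the Wronskian of $\{1, P_1, \dots, P_n\}$: since these are solutions of the Euler ODE, their Wronskian is a Vandermonde-type monomial in $z$ whose coefficient is $\prod_{1 \leq i \leq j \leq n}(\mu_i + \cdots + \mu_j)$, arising from the differences of the exponents $0, \mu_1, \mu_1 + \mu_2, \dots$; matching this normalization against the remaining Toda equations yields \eqref{prod}, the square appearing because both the holomorphic and the antiholomorphic Wronskians enter $e^{-U_1}$.

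Finally, for \eqref{c=0} I would expand $e^{-U_1} = |z|^{-2\alpha_1}\bigl(\lambda_0 + \sum_i \lambda_i |P_i|^2\bigr)$ and note that each cross term contributes a monomial proportional to $z^{\mu_1 + \cdots + \mu_i}\,\bar z^{\,\mu_1 + \cdots + \mu_j}$ together with its conjugate. Single-valuedness of $e^{-U_1}$ on $\mathbb{C}^* = \mathbb{C}\setminus\{0\}$ forces each such exponent difference $\mu_{j+1} + \cdots + \mu_i$ to be an integer; whenever it is not, the coefficient $c_{ij}$ must vanish, which is exactly \eqref{c=0}.
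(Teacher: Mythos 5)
Your overall route coincides with the paper's own (Section \ref{revisit}): use the $W$-symmetries of \cite{N2} plus the behavior of solutions at $0$ and $\infty$ to show each characteristic integral equals $w_j/z^{j+1}$ (Liouville's theorem on $\bc^*$, Eq.~\er{Ws}), deduce that $e^{-U_1}$ solves a Cauchy--Euler equation with the exponents \er{exponents}, recast everything through Leznov's iterated integrals, obtain \er{prod} from the Wronskian/determinant-one normalization, and obtain \er{c=0} from single-valuedness on $\bc^*$. Those parts of your sketch are sound, though the analytic input you gesture at (``these hypotheses bound the order of each $\mathcal W_k$'') is precisely the Brezis--Merle regularity estimates \er{order of V}--\er{order of U} imported from \cite{BM} and \cite{LWY}; they are what make the Liouville argument legitimate and should be stated, not assumed.

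There is, however, a genuine gap at the step where you pass to the form \er{form}. You assert that ``the positive constants $\lambda_i$ absorb the antiholomorphic data once reality of the $U_i$ is imposed.'' Reality only yields a \emph{Hermitian} expression $e^{-U_1}=\sum_{i,j}h_{ij}\bar f_i f_j$ as in \er{use M}; it does not make the matrix $H$ positive definite, and a Hermitian matrix that is positive along the curve $z\mapsto (f_0(z),\dots,f_n(z))$ need not be positive definite. Positive definiteness is a substantive analytic fact --- in \cite{LWY} it is proved using the finite-integral conditions, and the present paper explicitly cites it rather than reproving it --- and only once it is in hand can one invoke the Cholesky decomposition \er{UL} ($H=B^\dag B$ with $B=\Lambda C$ lower triangular with positive diagonal, \cite{GvL}) to produce the monic triangular polynomials $P_i$ and the positive $\lambda_i$ of \er{form}. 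Without that step the diagonal form in the theorem simply does not follow. Relatedly, your argument for \er{c=0} is too quick: the coefficient of $z^{\mu_1+\cdots+\mu_j}\,\bar z^{\,\mu_1+\cdots+\mu_i}$ in $\lambda_0+\sum_k\lambda_k|P_k|^2$ is not $\lambda_i c_{ij}$ but $\sum_{k\geq i}\lambda_k \,\ol{c_{ki}}\,c_{kj}$ (with $c_{kk}=1$), so single-valuedness annihilates these sums, not the individual coefficients. The clean repair is the one used in Proposition \ref{NGm}: single-valuedness forces $g^\dag H g=H$ for the diagonal unitary $g$ with entries $e^{2\pi i\beta_k}$, and uniqueness of the Cholesky factorization then gives $g^\dag B g=B$, i.e.\ exactly \er{c=0}; note that this fix, too, runs through positive definiteness. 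A downward induction on the rows of $C$ would also work, but one way or another this argument must be supplied.
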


The major tool in \cite{LWY} is the following characterization of the components of $e^{-U_1}$. This is proved in Steps 1 and 2 of the proof of the main theorem 5.1 in \cite{LWY}. 
\begin{proposition}[\cite{LWY}]\label{char exp}
For the system \er{toda2} of type $A_n$, the $U_1$ is expressed by 
\begin{equation}
\label{use M}
e^{-U_1} = \sum_{i,j=0}^n m_{ij} \bar{f_i} {f_j},
\end{equation}
where $M=(m_{ij})_{i, j=0}^n$ is a Hermitian matrix, and $f_i(z)= z^{\beta_i}$ with 
\begin{equation}
\label{exponents}
\beta_0 = -\alpha_1, \quad \beta_i = \alpha_i - \alpha_{i+1} + i \ \ \text{for }1\leq i\leq n-1,\quad \beta_n = \alpha_n + n.
\end{equation}
\end{proposition}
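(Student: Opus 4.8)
The plan is to derive \er{use M} from the integrable-systems structure of the $A_n$ Toda system and then to identify the exponents \er{exponents} by a local analysis at the singular source. First I would invoke the Leznov--Saveliev construction \cite{L, LS-book} in the first fundamental representation of $\mathfrak{sl}_{n+1}$, the standard representation $\mathbb{C}^{n+1}$. Writing the holomorphic component of the Lax connection in companion form, the first entry of a flat holomorphic frame satisfies a scalar linear ODE $Lf = 0$ of order $n+1$ whose coefficients are exactly the $W$-symmetries (characteristic integrals) of \cite{N2}, and these are holomorphic on $\mathbb{C}^* = \mathbb{C}\setminus\{0\}$ as a direct consequence of \er{U in zz}. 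If $f_0, \dots, f_n$ is a basis of holomorphic solutions of $Lf = 0$, then the general Leznov--Saveliev formula writes $e^{-U_1}$ as a sesquilinear pairing of holomorphic and antiholomorphic data; the reality of $U_1$ forces the antiholomorphic data to be the complex conjugate of the holomorphic data, yielding $e^{-U_1} = \sum_{i,j=0}^n m_{ij}\,\overline{f_i}\,f_j$ with $M = (m_{ij})$ a (positive-definite) Hermitian matrix. This already gives \er{use M}, except that the $f_i$ are so far only abstract solutions of $Lf = 0$.

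The crux is to show that $L$ is equidimensional (of Euler type), for then its solution space is spanned by pure monomials and one may take $f_i(z) = z^{\beta_i}$ exactly. Here I would argue on $\mathbb{P}^1$. Near the origin the singular source in \er{U in zz} forces $U_i \sim 2\alpha_i \log|z|$; substituting this into the explicit differential-polynomial expressions for the $W$-symmetries \cite{N2} shows that the $k$-th coefficient $W_k$ of $L$ has at worst a pole of order $k$ at $z = 0$. The finite-integral conditions in \er{toda2} force an analogous conical behavior at infinity, under which $W_k$ decays like $z^{-k}$ as $z \to \infty$. Since $W_k$ is holomorphic on all of $\mathbb{C}^*$, it is thus a meromorphic function on $\mathbb{P}^1$ with a pole of order at most $k$ at $0$ and a zero of order at least $k$ at $\infty$, whence $W_k = c_k z^{-k}$ for a constant $c_k$ and $L$ is Euler.

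With $L$ equidimensional, $f = z^\beta$ solves $Lf = 0$ precisely when $\beta$ is a root of an indicial polynomial of degree $n+1$. Rather than expand it, I would read off the roots by matching the asymptotics of the higher $\tau$-functions. Setting $\tau_k = e^{-U_k}$, the same formula in the $k$-th fundamental representation (the $k$-th exterior power for $A_n$) identifies $\tau_k$ with the Hermitian norm of $f_0 \wedge \cdots \wedge f_{k-1}$, a Wronskian--Plücker expression, so that $\tau_k \sim |z|^{2(\beta_0 + \cdots + \beta_{k-1}) - k(k-1)}$ near the origin. Comparing with $\tau_k \sim |z|^{-2\alpha_k}$, and using $\alpha_{n+1} = 0$ from triviality of the determinant representation of $SL(n+1,\bc)$, gives the recursion $\beta_0 + \cdots + \beta_{k-1} = \binom{k}{2} - \alpha_k$, which solves to \er{exponents}. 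Moreover the successive gaps are $\beta_i - \beta_{i-1} = 1 + \gamma_i = \mu_i > 0$, so the indicial roots are distinct; hence $L$ has no logarithmic solutions and the $f_i$ are genuine monomials.

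The main obstacle is the global step in the second paragraph: proving that each $W_k$ is \emph{exactly} $c_k z^{-k}$ and not merely of the correct pole order at the origin. This hinges on extracting the conical behavior at infinity from the finiteness of $\int_{\br^2} e^{u_i}\,dx$ and on excluding any other singularity on $\mathbb{C}^*$, an analysis of removable-singularity and Pohozaev type that is the genuine analytic content of Steps~1 and~2 of \cite{LWY}; the representation-theoretic and ODE-theoretic steps above then only organize and exploit its conclusions.
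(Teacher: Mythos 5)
Your proposal follows the same backbone as the paper's proof: the characteristic integrals ($W$-symmetries) of \cite{N2} are holomorphic on $\bc^*$ for any solution, the pole-order estimates near $0$ and decay at infinity force each coefficient to be an exact power of $z$ by a Liouville-type argument on $\bp^1$, hence $e^{-U_1}$ is annihilated by a Cauchy--Euler operator, and reality of $U_1$ then yields the Hermitian form \er{use M}. (The paper obtains the ODE even more directly than your flat-frame/companion-form argument: in the factorization \er{my way} the rightmost factor is $\p_z + U_{1,z}$, so $\cl e^{-U_1}=0$ is immediate.) Both arguments rest on the same Brezis--Merle estimates \er{order of U} imported from \cite{LWY, BM}, which you correctly identify as the analytic core.

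Where you genuinely diverge is the identification of the exponents \er{exponents}, and this is also where your argument has loose ends. The paper observes that, once $W_j = w_j z^{-(j+1)}$ is known exactly, the constants $w_j$ can be computed by substituting only the $2\alpha_i \log |z|$ summands of the $U_i$ into \er{my way}; this produces the explicitly factorized Euler operator \er{alpha}, from which the indicial roots, and their distinctness (consecutive gaps equal $\mu_i>0$), are read off in one stroke. You instead leave the constants undetermined and recover the roots by matching the asymptotics of the higher tau-functions $e^{-U_k}$ near the origin against Wronskian--Pl\"ucker minors (the paper's \er{Um}), using $\alpha_{n+1}=0$. The recursion $\beta_0+\cdots+\beta_{k-1}=\binom{k}{2}-\alpha_k$ is correct and does solve to \er{exponents}, but this route costs more than the paper's: (i) to know that the dominant term of $e^{-U_k}$ is the one coming from $f_0\wedge\cdots\wedge f_{k-1}$ you need positive definiteness of $M$, an input from \cite{LWY} that the paper's proof of this proposition never uses (only Hermitian-ness is asserted); and (ii) as written, your derivation presupposes a monomial fundamental system $f_i=z^{\beta_i}$, while the distinctness of the indicial roots that justifies the absence of logarithmic solutions is only verified afterwards from the answer, which is circular. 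The circularity is reparable --- a repeated root would put a $\log$ factor with positive coefficient into the leading term of some $e^{-U_k}$, contradicting that $|z|^{2\alpha_k}e^{-U_k}$ tends to a positive constant by the continuity of $V_k$ at the origin --- but that repair is an extra argument you would have to supply, whereas the paper's computation of \er{alpha} delivers the roots and their simplicity simultaneously and with less machinery.
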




\begin{proof}[New proof of Prop \ref{char exp} using \cite{N2}] The strategy for the proof is as follows. For the reader's convenience, we first repeat some estimates from \cite{LWY} using \cite{BM}. Then we present the formulas in \cite{N2} for computing the $W$-symmetries, also called characteristic integrals $W_j$. Then we use the above estimates to quickly show that the $W_j$ have simple forms, and this further implies that $e^{-U_1}$ has simple components. 

Following \cite[Eq. (5.10)]{LWY}, introduce 
\begin{equation}\label{def V}
V_i = U_i - 2\alpha_i \log |z|, \quad 1\leq i\leq n.
\end{equation}
Then system \er{toda2} becomes
$$
\begin{cases}
\displaystyle\Delta V_i = -4 |z|^{2\gamma_i} \exp\Big(\sum_{j=1}^n a_{ij} V_j\Big)\\
\displaystyle \int_{\br^2} |z|^{2\gamma_i} \exp\Big(\sum_{j=1}^n a_{ij} V_j\Big) \, dx <\infty.
\end{cases}
$$
As $\gamma_i > -1$, applying Brezis-Merle's argument in \cite{BM}, we have $V_i \in C^{0, \alpha}$ on $\bc$ for some $\alpha \in (0, 1)$ and they are upper bounded over $\bc$. Furthermore 
\begin{equation}
\label{order of V}
\begin{split}
\p_z^k V_i(z) &= O(1+|z|^{2+2\gamma_i -k}) \quad\text{ near } 0,\\
\p_z^k V_i(z) &= O(|z|^{-k}) \quad\text{ near } \infty, \forall\, k\geq 1.
\end{split}
\end{equation}
Now \er{def V} clearly implies that 
\begin{equation}
\label{order of U}
\begin{split}
\p_z^k U_i(z) &= O(|z|^{-k}) \quad\text{ near } 0, \forall\, k\geq 1,\\
\p_z^k U_i(z) &= O(|z|^{-k}) \quad\text{ near } \infty, \forall\, k\geq 1.
\end{split}
\end{equation}

For the Toda system \er{toda2} of type $A$, the $W$-symmetries are computed in \cite[Example 3.1]{N2} by the expansion
\begin{equation}
\label{my way}
\begin{split}
\cl &= (\p_z - U_{n, z}) (\p_z + U_{n, z} - U_{n-1, z}) \cdots (\p_z + U_{2, z} - U_{1, z}) (\p_z + U_{1, z}) \\
&=  \p_z^{n+1} + \sum_{j=1}^n W_j \p_z^{n-j}.
\end{split}
\end{equation}
(Here we have used the symmetry of the $A_n$ system with respect to $U_i$ and $U_{n+1-i}$ in the original formula (3.1) in \cite{N2} to conform to \cite{LWY}. For $1\leq j\leq n$, our $W_j = Z_{n+1-j}$ in \cite[Lemma 5.2]{LWY}.) The $W_j$ are differential polynomials in the $U_i$, that is, they are polynomials in the $U_i$ and their derivatives with repsect to $z$. The $W_j$ are called characteristic integrals and denoted by $I_j$ in \cite{N2}, since for solutions $U_i$ to the Toda system \er{toda2}, they satisfy the property 
\begin{equation}\label{char prop}
W_{j, \bar z}=0, \quad 1\leq j\leq n.
\end{equation} 
The $W_j$ are also called $W$-symmetries in \cite{LWY}. 
For a differential mononomial in the $U_i$, we call by its \emph{degree} the sum of the orders of differentiation multiplied by the algebraic degrees of the corresponding factors. For example $U_{1, zz} - U_{1,z}^2$ has a homogeneous degree $2$. 
Clearly from the formula \er{my way}, we see that $W_j$ has homogeneous degree $j+1$ for $1\leq j\leq n$. 

Therefore \er{order of U} and \er{char prop} imply that $z^{j+1} W_j$ is holomorphic and bounded on $\bc^*$, and hence is a constant by Liouville's theorem. That is, for $1\leq j\leq n$, 
\begin{equation}\label{Ws}
W_j = \frac{w_j}{z^{j+1}} \quad \text{ on }\bc^*
\end{equation} 
for some number $w_j$. 

Since $\gamma_i>-1$, Eqs. \er{order of V} and \er{order of U} imply that near 0, the $W_j$ can be computed using the 
$$2\alpha_i \log |z| = \alpha_i (\log z + \log \bar z)$$
summand of $U_i$ in \er{def V}. That is, we have 
\begin{equation}
\label{alpha}
\begin{split}
&\quad \Big(\p_z - \frac{\alpha_n}{z}\Big) \Big(\p_z + \frac{\alpha_{n} - \alpha_{n-1}}{z}\Big) \cdots \Big(\p_z + \frac{\alpha_2 - \alpha_1}{z}\Big) \Big(\p_z + \frac{\alpha_1}{z}\Big) \\
&=  \p_z^{n+1} + \sum_{j=1}^n W_j \p_z^{n-j}.
\end{split}
\end{equation}

This totally determines the $W_j = \frac{w_j}{z^{j+1}}$, and in particular the $w_j$ as real numbers. Now from \er{my way}, it is clear that $\cl e^{-U_1} = 0$. Call $f=e^{-U_1}$, and we have 
$$
f^{(n+1)} + \sum_{j=1}^n \frac{w_j}{z^{j+1}} f^{(n-j)} = 0.
$$
The fundamental solutions of this Cauchy-Euler equation are of the form $z^{\beta_i}$ for $0\leq i \leq n$. We use \er{alpha} to quickly see that the $\beta_i$ are those specified in \er{exponents}. 

Since we require that our solutions are real, we see that $e^{-U_1}$ must have the form in \er{use M} using a Hermitian matrix. 
\end{proof}

\begin{remark} \cite{LWY} obtained the characteristic exponents in \er{exponents} by constructing the solutions to the Toda system \er{toda2} and hence their method is indirect. Here the structure theory of the $W$-symmetries developed by this author makes this proof more transparent. Actually, in Remark 3.4 of \cite{LWY}, the authors mentioned that the explicit formulas of these invariant are hard to find in the literature, and they constructed these invariants by a different method in \cite{LWY}. 
\end{remark}

In the remainder of this section, we cast the results in \cite{LWY} in the framework of iterated integral solutions of \cite{L}. There are no new results or proofs here, except that the viewpoint is to this author more conceptual. 
Using $\mu_i = \gamma_i + 1$ for $1\leq i\leq n$, it is easy to see that \er{exponents} is equivalent to 
\begin{equation}\label{new exp}
\beta_0 = -\alpha_1,\quad \beta_i - \beta_0 = \mu_1 + \cdots +\mu_i,\ \forall\, 1\leq i\leq n.
\end{equation}

In the literature of Toda systems as integrable systems, especially in \cite{L}, there is a general construction of solutions to the Toda systems of type $A$ given $n$ arbitrary functions $\phi_1(z), \dots, \phi_n(z)$ of $z$ and $n$ arbitrary functions of $\bar z$. First we formally and locally define the intreated integrals
\begin{multline}\label{iteint}
\sigma_0(z) := 1,\ \sigma_1(z) := \int^z_0 \phi_1(z_1)\, dz_1, \\
\sigma_i(z) :=\int^z_0 \phi_{1}(z_1)\,dz_1\cdots\int^{z_{i-2}}_0\phi_{i-1}(z_{i-1})\,dz_{i-1}\int^{z_{i-1}}_0\phi_{i}(z_i)\,dz_i, \text{ for a general }i.
\end{multline}
Also define 
\begin{equation}\label{bottom}
\xi(z) := \prod_{j=1}^n \phi_j(z)^{a^{1j}} = \prod_{j=1}^n \phi_j(z)^{\frac{n+1-j}{n+1}},
\end{equation}
where the $a^{1j}= \frac{(n+1-j)}{n+1}$ are from the first row of the inverse Cartan matrix to \er{Cartan-A}. 
Define 
$$
\nu_i(z) := \frac{\sigma_i(z)}{\xi(z)},\quad 0\leq i\leq n.
$$
The important property \cite{L, LS-book, N2} is that the Wronskian
\begin{equation}\label{ww1}
W(\nu_0, \nu_1, \dots, \nu_n) = 1
\end{equation}
for $n$ arbitrary functions $\phi_1, \cdots, \phi_n$.  

Since we are only interested in real solutions, the functions of $\bar z$ are hence taken to be just the conjugates. Then \cite{L} asserts that  
\begin{equation}
\label{Leznov}
e^{-U_1} = |\nu_0|^2 + |\nu_1|^2 + \cdots + |\nu_n|^2 = \frac{1 + |\sigma_1(z)|^2 + \cdots + |\sigma_n(z)|^2}{ \big| \xi(z) \big |^2}
\end{equation}
defines a solution to the Toda systems of type $A_n$. 

Very neatly put, Proposition \ref{char exp} through the $W$-symmetries just determines that the integrand functions in the iterated integral scheme of Leznov and Saveliev \cites{L, LS, LS-book} are just the following functions
\begin{equation}
\label{integrands}
\phi_i(z) = z^{\gamma_i}, \quad \forall\, 1\leq i\leq n.
\end{equation}
Then \er{iteint} and \er{bottom} become (at least after the branch cut on $\bc\backslash \{x \,|\, x_1\leq 0\}$) 
\begin{equation}\label{new sigma}
\sigma_i(z) = \frac{z^{\mu_1 + \cdots +\mu_i}}{\mu_i(\mu_i + \mu_{i-1}) \cdots (\mu_i + \cdots + \mu_1)}, \quad \forall\, 0\leq i\leq n,
\end{equation}
and 
\begin{equation}\label{find xi}
\xi(z) = \prod_{j=1}^n z^{a^{1j} \gamma_j} = z^{\alpha_1}.
\end{equation}
Using \er{new exp}, we see that 
\begin{equation}\label{scale}
\nu_i(z) = \frac{\sigma_i(z)}{\xi(z)} = \frac{1}{\mu_i\cdots(\mu_i+\cdots+\mu_1)} z^{\beta_i} =\chi_i f_i,\quad 0\leq i\leq n.
\end{equation}
That is, $\nu_i$ is the same as the $f_i$ in Proposition \ref{char exp} up to a scale 
$\displaystyle\chi_i = \prod_{j=1}^i \Big(\sum_{k=j}^i \mu_k\Big)^{-1}$.

Therefore \er{Leznov} becomes 
\begin{equation}\label{radial}
e^{-U_1} = {|z|^{-2\alpha_1}}\Big(\chi_0^2 + \sum_{i=1}^n \chi_i^2 |z^{\mu_1 + \cdots + \mu_i} | ^{2} \Big).
\end{equation}
We note that corresponding to \er{prod}, we have 
$$
\chi_0^2 \cdots \chi_n^2 = \prod_{1\leq i\leq j\leq n} \Big( \sum_{k=i}^j \mu_k \Big)^{-2}.
$$

The solution \er{radial} corresponds to a radial solution of the Toda system \er{toda2}, and there are other solutions. The extra freedom comes from the Hermitian matrix $M$ in terms of the $f_i$ from \er{use M}. From our point of view, the version using the $\nu_i$ in \er{scale} is more natural and has better properties later on. Therefore we define 
\begin{equation}\label{use H}
e^{-U_1} = \sum_{i,j=0}^n h_{ij} \bar{\nu_i} \nu_j,
\end{equation}
where
$
H = (h_{ij})_{i, j=0}^n = X M X,
$
and $X := \on{diag}(\chi_0, \chi_1, \cdots, \chi_n)^{-1}$ is the diagonal matrix. 

Then $H$ is Hermitian and must have determinant 1, i.e. $H\in SL(n+1, \bc)$, for \er{use H} to define a solution of the Toda system \er{toda2} of type $A_n$.  See \cite{L, N1}, and this corresponds to 
condition \er{prod} from \cite{LWY}. 

Furthermore as shown in \cite{LWY}, $M$ and hence $H$ must be positive definite. 
The Cholesky decomposition \cite{GvL} then states that
\begin{equation}\label{UL}
H = B^\dag B,
\end{equation}
where $B$ is a lower-triangular matrix with diagonal entries $b_{ii}>0$ and $B^\dag$ denotes the conjugate transpose $\bar B^t$. Clearly $B = \Lambda C$ where $\Lambda$ is the diagonal of $B$ and $C$ is lower-triangular with 1's on its diagonal. 

This then gives the form \er{form} of the solutions in Theorem \ref{A case}. For the solution to be well-defined on $\bc^*$, it is easy to see the condition \er{c=0} on the $c_{ij}$. 



We summarize the results for the $A_n$ case in the following form, which provides more details to Theorem \ref{A in intro}. 
\begin{proposition} All the solutions to the Toda system \er{toda2} of type $A_n$ are given, in terms of the first unknown $U_1$, in the following way. Using the family of functions $\phi_i(z)$ as in \er{integrands}, define the iterated integrals $\sigma_i(z)$ as in \er{new sigma} and the $\nu_i(z) = \frac{\sigma_i(z)}{\xi(z)}$ as in \er{scale} with $\xi(z)=z^{\alpha_1}$ as in \er{find xi}. Consider the vector of functions 
\begin{equation}\label{vect}
{\bnu} = (\nu_0, \nu_1, \cdots, \nu_n)^t.
\end{equation}

Let $G=SL_{n+1}(\bc)$ and let $G=KAN$ be its Iwasawa decomposition with $K=SU(n+1)$, $A$ the abelian group of diagonal matrices with positive diagonal entries and determinant 1, and $N$ the unipotent group of unipotent lower-triangular matrices. Corresponding to \er{c=0}, let $N_\Gamma$ be the subgroup of $N$ corresponding to the following system of roots $\Delta_\Gamma \subset \Delta^+$
\begin{equation}\label{A cond}
\Delta_\Gamma := \Big\{\sum_{k=j+1}^i \tau_k \,\Big|\, \sum_{k=j+1}^i \gamma_k \in \bz,\ j<i \Big\}. 
\end{equation}
Here the $\tau_k$ are the system of simple roots for $A_n$, and this system $\Delta_\Gamma$ is closed under addition. (See Definition \ref{in roots} for more.)

Then the solution space of \er{toda2} is parametrized by $AN_\Gamma$. In particular for $\Lambda\in A$ and $C\in N_\Gamma$, 
$$
U_1 = -\log | \Lambda C \bnu |^2
$$
defines a solution. 
\end{proposition}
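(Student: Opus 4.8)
The plan is to synthesize the analytic and algebraic facts already assembled above into a single group-theoretic statement; no genuinely new computation is needed, only careful bookkeeping of what each ingredient contributes. I would start from an arbitrary solution $U_1$ of \er{toda2} of type $A_n$. Proposition \ref{char exp}, together with the rescaling \er{scale} and the reformulation \er{use H}, lets me write $e^{-U_1} = \bnu^\dag H \bnu = \sum_{i,j=0}^n h_{ij}\bar\nu_i\nu_j$ with $H=(h_{ij})$ Hermitian, where $\bnu$ is the explicit vector \er{vect} built from the iterated integrals with integrands $\phi_i(z)=z^{\gamma_i}$. I would then invoke the two structural constraints already recorded: $\det H = 1$, i.e. $H\in SL(n+1,\bc)$ (from \cite{L, N1}, corresponding to \er{prod}), and $H$ positive definite (from \cite{LWY}). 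Because the functions $\bar\nu_i\nu_j = \chi_i\chi_j\bar z^{\beta_i}z^{\beta_j}$ are linearly independent (the $\beta_i$ are distinct, as each $\mu_i>0$), $H$ is uniquely determined by $U_1$.

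Next I would apply the Cholesky decomposition \er{UL}, writing $H = B^\dag B$ with $B$ lower-triangular and positive diagonal $b_{ii}>0$. From $\det H = |\det B|^2 = \big(\prod_i b_{ii}\big)^2 = 1$ and $b_{ii}>0$ we get $\prod_i b_{ii}=1$, so $B\in SL(n+1,\bc)$; factoring $B = \Lambda C$ with $\Lambda = \on{diag}(b_{00},\dots,b_{nn})\in A$ and $C\in N$ unipotent lower-triangular exhibits $B$ as an element of $AN$ in the Iwasawa decomposition. The quadratic form then collapses to a norm,
$$
e^{-U_1} = \bnu^\dag B^\dag B\,\bnu = |B\bnu|^2 = |\Lambda C\bnu|^2,
$$
so $U_1 = -\log|\Lambda C\bnu|^2$. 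Since the Cholesky factor with positive diagonal is unique, the pair $(\Lambda,C)$ is determined by $H$, hence by $U_1$; conversely the Leznov--Saveliev construction \er{Leznov} (see \cite{L, N1}) shows that every positive-definite Hermitian matrix of determinant one yields a solution of \er{toda2}. This already gives a bijection between solutions and a subset of $AN$.

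The remaining and most delicate point is to determine exactly which unipotent factors $C$ occur, i.e. to match the requirement that $e^{-U_1}$ be well-defined on $\bc^* = \bc\setminus\{0\}$ with membership in $N_\Gamma$. Factoring out $|z|^{-2\alpha_1}$ and using $\beta_i = -\alpha_1 + (\mu_1+\cdots+\mu_i)$ gives
$$
e^{-U_1} = |z|^{-2\alpha_1}\sum_{i,j=0}^n H_{ij}\,\chi_i\chi_j\,\bar z^{\,\mu_1+\cdots+\mu_i}z^{\,\mu_1+\cdots+\mu_j},
$$
so single-valuedness on $\bc^*$ amounts to the vanishing of $H_{ij}$ whenever its monomial is multivalued. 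Since $\beta_i-\beta_j = \mu_{j+1}+\cdots+\mu_i = (\gamma_{j+1}+\cdots+\gamma_i)+(i-j)$ by \er{new exp}, this is exactly the requirement $H_{ij}=0$ whenever $\gamma_{j+1}+\cdots+\gamma_i\notin\bz$. I expect the heart of the argument to be transferring this vanishing pattern from $H$ to its Cholesky factor $C$: showing that $H_{ij}=0$ on all such "bad" pairs is equivalent to $C_{ij}=0$ on the same pairs. This is where the closure of $\Delta_\Gamma$ under addition is essential: if $\gamma_{j+1}+\cdots+\gamma_i\in\bz$ and $\gamma_{i+1}+\cdots+\gamma_l\in\bz$ then $\gamma_{j+1}+\cdots+\gamma_l\in\bz$, so the "good" index intervals are stable under concatenation, the good pattern is preserved by triangular factorization, and $N_\Gamma$ is a genuine subgroup of $N$. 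Since $c_{ij} = \chi_j\chi_i^{-1}C_{ij}$, this recovers \er{c=0} and identifies the admissible factors as precisely $C\in N_\Gamma$. Conversely, for any $\Lambda C\in AN_\Gamma$ the coefficients of all multivalued monomials vanish by construction, so $|\Lambda C\bnu|^2$ is a single-valued positive function on $\bc^*$ and $U_1 = -\log|\Lambda C\bnu|^2$ is a bona fide solution, which completes the parametrization by $AN_\Gamma$.
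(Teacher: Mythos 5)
Your proposal is correct, and its skeleton is exactly the paper's: Proposition \ref{char exp} (obtained via the $W$-symmetries), the passage to Leznov's iterated-integral vector $\bnu$ and the Hermitian matrix $H$ of \er{use H}, the constraints $\det H=1$ (i.e.\ \er{prod}) and positive definiteness from \cite{LWY}, and then the Cholesky factorization $H=B^\dag B$ with $B=\Lambda C$ identifying each solution with a unique point of $AN$.

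The one place where you genuinely diverge is the last step, pinning down which unipotent factors occur. For type $A$ the paper simply invokes condition \er{c=0} of Theorem \ref{A case}, deferring to \cite{LWY}; its own rigorous version of this step, given in Proposition \ref{NGm} for types $C$ and $B$ but equally valid here, goes through the element $g_\Gamma$ of \er{cent}: well-definedness on $\bc^*$ means $\bnu^\dag H\bnu$ is invariant under $z\mapsto e^{-2\pi i}z$, i.e.\ $H=g_\Gamma^\dag H g_\Gamma$, and since $g_\Gamma^\dag B g_\Gamma$ is again lower-triangular with the same positive diagonal, the \emph{uniqueness} of the Cholesky factor forces $g_\Gamma^\dag B g_\Gamma=B$, i.e.\ $C\in N_\Gamma=N^{\on{Ad}_{g_\Gamma}}$ as in \er{intrinsic}. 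You instead transfer the vanishing pattern $H_{ij}=0$ (for $\beta_i-\beta_j\notin\bz$) to the factor $C$, asserting that this pattern is ``preserved by triangular factorization'' because $\Delta_\Gamma$ is closed under addition. That assertion is true --- the relation $\beta_i\equiv\beta_j \pmod{\bz}$ is an equivalence relation, so the matrices supported on the pattern form a subalgebra closed under conjugate transpose, and a positive-definite element of it has its Cholesky factors inside it --- but as written it is the one unproven claim in your argument, and the cleanest proof of it is precisely the conjugation-plus-uniqueness argument above. So modulo tightening that step (which the paper has already supplied), your proof coincides with the paper's; what your phrasing adds is an explicit explanation of \emph{why} closure under addition, i.e.\ the subgroup property of $N_\Gamma$, is forced by the factorization, a point the paper only records implicitly in Definition \ref{in roots}. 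Your observation that $H$, and hence the pair $(\Lambda, C)$, is uniquely determined by $U_1$ (via the linear independence of the $\bar z^{\beta_i}z^{\beta_j}$) also makes the injectivity of the parametrization more explicit than the paper does.
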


\section{Minors of symplectic and orthogonal matrices}\label{minors}

In preparation for our classification of solutions to the $C_n$ and $B_n$ Toda systems, we need some algebraic properties of symplectic and odd-dimensional orthogonal matrices. More precisely, we will
show that such matrices in the special linear group are characterized by some equalities among their minors.

First we define the following rank-$k$ matrix 
\begin{equation}\label{def j}
J_{k}=\begin{pmatrix}
& & & & 1\\
& & & -1 & \\
& & 1 & & \\
& \iddots & & & \\
(-1)^{k-1} & & & &\\
\end{pmatrix},
\end{equation}
with $\pm 1$ along the secondary diagonal. 
When $k=2n$ is even, $J_{2n}$ is skew-symmetric and defines a non-degenerate skew-symmetric bilinear form on $\bc^{2n}$. 
When $k=2n+1$ is odd, $J_{2n+1}$ is symmetric and defines a non-degenerate symmetric bilinear form on $\bc^{2n+1}$. 
Note that 
\begin{equation}\label{J-1}
J^{-1} = (-1)^{k-1} J.
\end{equation}

We define the complex symplectic group and complex special orthogonal group in dimensions $2n$ and $2n+1$ with respect to the $J_{2n}$ or $J_{2n+1}$, that is, 
\begin{align}
Sp(2n, \bc) &:= \{A\in GL(2n, \bc)\,|\, A^t J_{2n} A = J_{2n}\} \label{symp}\\
SO(2n+1, \bc) &:= \{A\in SL(2n+1, \bc)\,|\, A^t J_{2n+1} A = J_{2n+1}\} \label{orth}.
\end{align}
A matrix $A$ in $Sp(2n, \bc)$ is called a symplectic matrix, and it is well-known that $\det A = 1$. A matrix $A$ in $SO(2n+1, \bc)$ is required to have $\det A = 1$ (in general the determinant is $\pm 1$), and is called a special odd-dimensional orthogonal matrix, or just an orthogonal matrix for short. 

\begin{remark} There are other more conventional choices of the matrices defining the bilinear forms, and our versions can be related to them easily. For example in the symplectic case, to transform from $J_{2n}$ in \er{def j} to the more conventional 
$$
\Omega_{2n}=\begin{pmatrix}
& & & & & 1\\
& & & & \iddots & \\
& & & 1 & & \\
& & -1 & & & \\
& \iddots & & & & \\
-1 & & & & &\\
\end{pmatrix},
$$
we define a diagonal matrix $Q$ with 
\begin{equation}\label{def Q}
Q_{ii} = \begin{cases} (-1)^{i-1} & 1\leq i\leq n \\
1 & n+1\leq i\leq 2n
\end{cases},
\end{equation}
and then 
$$
Q J_{2n} Q = \Omega_{2n}.
$$
Therefore if $A$ in \er{symp} is symplectic with respect to $J_{2n}$, then $QAQ$ is symplectic with respect to $\Omega_{2n}$.  

Similarly in our orthogonal case, the $Q$ in \er{def Q} with the $2n$ replaced by $2n+1$ transforms our $J_{2n+1}$ in \er{def j} to the more conventional secondary diagonal matrix $\Theta$ with $\Theta_{i, 2n+2-i}=1$ for $1\leq i\leq 2n+1$. 
\end{remark}

This author discovered the bilinear form in \er{def j} in the work \cite{N1}, and they are the most relevant for the study of Toda systems as they are naturally related to the differential properties of iterated integrals (see Proposition \ref{mine} below). Furthermore the related algebraic properties of minors are also nicer since no signs come up at all. Proposition \ref{prop-minors} below is an improvement of \cite[Prop. 3.4]{N1}, which only treated the case of principal minors. The corresponding result using the more conventional bilinear forms for general minor would involve many signs that are hard to pin down (see \cite[Remark 3.5]{N1}). 

Let us first introduce some notation. Let $\{e_i\}_{i=1}^k$ be the standard basis of $\bc^k$. 
Let $A\in SL(k,\bc)$ be a matrix with determinant 1. We are concerned with the cases where $k=2n$ and $A$ is symplectic as in \er{symp}, or $k=2n+1$ and $A$ is special orthogonal as in \er{orth}. 
The matrix $A = (a_{ij})$ defines a linear transformation $\ca$ on $\bc^k$ by 
$$
\ca(e_j) = \sum_{i=1}^k a_{ij} e_i,\quad 1\leq j\leq k.
$$
Naturally $\ca$ further induces linear transformations on the exterior powers $\wedge^* \bc^k$, for which we still use the notation. 

Let $S\subset \{1,2,\cdots,k\}$ be a subset of indices. Denote the number of elements in $S$ by $m$ and write 
\begin{equation}\label{write S}
S=\{s_1,s_2,\cdots,s_m\} \quad \text{with }s_1<s_2<\cdots<s_m.
\end{equation}
Let $T$ be another subset with the same cardinality as $S$, and write 
\begin{equation}\label{write T}
T=\{t_1,t_2,\cdots,t_m\} \quad \text{with }t_1<t_2<\cdots<t_m.
\end{equation}
Let $A_{S,T}$ denote the minor of $A$ with row indices in $S$ and column indices in $T$, that is, 
$$
A_{S,T}=\det(a_{s_i t_j})_{i,j=1}^m. 
$$

The efficient way of viewing minors is through exterior products. Insisting on ordering the elements increasingly as in \er{write S}, we define 
$$
e_S:=e_{s_1}\wedge e_{s_2}\wedge \cdots \wedge e_{s_m}\in \wedge^m \bc^k.
$$
Then the minor $A_{S,T}$ is nothing but the entry of the matrix for the induced transformation on $\wedge^m \bc^n$ in the $(e_S,e_T)$ position, that is,
\begin{equation}\label{coeff}
\ca(e_T) = A_{S,T} e_S + \text{other terms}.
\end{equation}
Note that for the top exterior form, $A\in SL(k, \bc)$ implies that 
\begin{equation}\label{top}
\ca(e_1\wedge\cdots\wedge e_k)=(\det A)(e_1\wedge \cdots\wedge e_k)=e_1\wedge \cdots\wedge e_k.
\end{equation}

Denote the complement of $S$ in $\{1,2,\cdots,k\}$ by $\bar S$.  
Let $\iota$ denote the inversion of the indices in $\{1,2,\cdots,k\}$, that is, 
\begin{equation}\label{inversion}
\iota(j)=k+1-j, \quad 1\leq j\leq k.
\end{equation}
The notation $\iota(S)$ denotes the set $\{\iota(s)\,|\, s\in S\}$, where we need to re-order the elements to make them increasing. 

\begin{proposition}\label{prop-minors}
For $A\in Sp(2n, \bc)$ or $A\in SO(2n+1, \bc)$ and any two subsets $S$ and $T$ of $\{1, 2, \cdots, 2n(+1)\}$ of the same cardinality, we have 
\begin{equation}\label{neat}
A_{ST} = A_{\iota(\bar S) \iota(\bar T)}.
\end{equation}

On the other hand, if $A\in SL(k, \bc)$ and \er{neat} is satisfied for all $S$ and $T$ with cardinality 1, that is, for all $1\leq s, t\leq k$ 
\begin{equation}\label{minor}
A_{st} = A_{\bar{\iota(s)},\bar{\iota(t)}},
\end{equation}
where $A_{st}=a_{st}$ is the element of $A$ in the $(s, t)$ position, and $A_{\bar{\iota(s)},\bar{\iota(t)}}$ is the minor of $A$ after deleting the $\iota(s)$-th row and $\iota(t)$-th column, 
then the matrix $A$ is either symplectic or orthogonal depending on the parity of $k$. 
\end{proposition}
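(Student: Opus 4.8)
The plan is to convert the defining relation into a single statement about the inverse matrix and then read off both directions from classical minor identities. From $A^t J A = J$ together with $J^{-1}=(-1)^{k-1}J$ in \er{J-1}, one obtains the clean relation
\begin{equation*}
A^{-1} = J^{-1} A^t J, \qquad\text{equivalently}\qquad J A J^{-1} = (A^{-1})^t .
\end{equation*}
The point of the particular $J$ in \er{def j} is that it is a \emph{signed permutation matrix} realizing the inversion $\iota$ of \er{inversion}: one checks that $J e_j = (-1)^{k-j} e_{\iota(j)}$. Hence conjugation by $J$ (and the induced maps on $\wedge^m\bc^k$) permutes the standard minors according to $\iota$, up to explicit signs, and the whole proposition becomes a matter of matching these signs against the ones produced by duality.

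For the forward direction I would combine two ingredients. First, Jacobi's complementary-minor identity, valid since $\det A = 1$: writing $\Sigma(S)=\sum_{i\in S} i$,
\begin{equation*}
A_{S,T} = (-1)^{\Sigma(\bar S)+\Sigma(\bar T)}\,(A^{-1})_{\bar T,\bar S}.
\end{equation*}
Second, the conjugation formula for minors of $A^{-1}=J^{-1}A^t J$: because $J$ is the signed permutation above, the minor $(J^{-1}A^t J)_{\bar T,\bar S}$ equals $A_{\iota(\bar S),\iota(\bar T)}$ times a sign coming from the entries $(-1)^{k-j}$ of $J$ and from reordering $\iota(\bar S)$ and $\iota(\bar T)$ increasingly. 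Substituting the second identity into the first, the claim \er{neat} becomes the assertion that these two collections of signs cancel exactly, leaving coefficient $+1$. I would then verify this cancellation; it is precisely here that the signs $(-1)^{i-1}$ built into \er{def j} are engineered so that no residual sign survives for minors of every size.

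The converse needs only cardinality one and is cleaner. By Cramer's rule and $\det A =1$, the complementary $(k-1)$-minors are entries of the inverse:
\begin{equation*}
A_{\bar p,\bar q} = (-1)^{p+q}(A^{-1})_{q,p}.
\end{equation*}
Taking $p=\iota(s)$, $q=\iota(t)$ and using $(-1)^{\iota(s)+\iota(t)}=(-1)^{s+t}$, the hypothesis \er{minor}, namely $a_{st}=A_{\bar{\iota(s)},\bar{\iota(t)}}$, rewrites as
\begin{equation*}
(A^{-1})_{\iota(t),\iota(s)} = (-1)^{s+t}\, a_{st}\qquad\text{for all }s,t.
\end{equation*}
On the other hand a direct computation with the signed permutation $J$ gives $(J^{-1}A^t J)_{pq} = (-1)^{p+q} a_{\iota(q),\iota(p)}$; setting $p=\iota(t)$, $q=\iota(s)$ turns the right-hand side into $(-1)^{s+t}a_{st}$ as well. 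Since $(\iota(t),\iota(s))$ runs over all index pairs as $(s,t)$ does, the two displays say exactly $A^{-1}=J^{-1}A^t J$, that is $A^t J A = J$. The parity of $k$ then finishes it: for $k=2n$ the form $J$ is skew, so $A\in Sp(2n,\bc)$, while for $k=2n+1$ it is symmetric and, with the given $\det A=1$, $A\in SO(2n+1,\bc)$.

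The main obstacle is the sign bookkeeping in the forward direction: reconciling the Jacobi sign $(-1)^{\Sigma(\bar S)+\Sigma(\bar T)}$ with the product of entry-signs of $J$ over $\bar S$ and $\bar T$ and with the permutation signs incurred when reordering $\iota(\bar S)$ and $\iota(\bar T)$. To keep this under control I would phrase it through the induced maps on $\wedge^m\bc^k$, recording $\wedge^m J$ as the involution $e_T\mapsto \pm\, e_{\iota(T)}$ and using the top-form duality $e_S\wedge e_{\bar S}=\pm\, e_1\wedge\cdots\wedge e_k$ underlying \er{top} to convert the $\ca$-invariance of $J$ into the minor identity. The converse, by contrast, is essentially immediate once Cramer's rule is combined with the signed-permutation description of $J$.
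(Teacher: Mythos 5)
Your proposal is correct and is at bottom the same argument as the paper's: both directions hinge on rewriting the defining condition as $A^{-1}=J^{-1}A^tJ$ and then combining complementary-minor duality with the fact that $J$ is a signed permutation realizing $\iota$; the paper's wedge-product step \er{first}, which extracts $A_{\bar S\bar T}$ from $e_{\bar T}\wedge(\ca^{-1}e_S)$ via top-form duality, is exactly your Jacobi identity derived inline, and your Cramer's-rule converse is its cardinality-one case. The one step you defer --- the global sign count in the forward direction --- does close: with $m=|S|$, the product of your Jacobi sign $(-1)^{\Sigma(\bar S)+\Sigma(\bar T)}$, the two signs from $\cj e_{\bar S}$ and $\cj e_{\iota(\bar T)}$ (each of the form $(-1)^{(k-m)k-\Sigma(\cdot)+(k-m)(k-m-1)/2}$), and the factor $(-1)^{(k-1)(k-m)}$ from $J^{-1}=(-1)^{k-1}J$ in \er{J-1} has total exponent congruent mod $2$ to $(k-m)(k-m-3)$, which is even, so the residual coefficient is indeed $+1$ for minors of every size. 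The paper avoids any such global count by pairing its two signs as $\ve_1\ve_2=1$ in \er{=1} and cancelling them in the middle of the wedge computation, so if you carry out your bookkeeping through $\wedge^m\bc^k$ as you propose, you will in effect reproduce the paper's proof.
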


\begin{proof} The two cases are treated in the same way with $k$ denoting either $2n$ in the symplectic case or $2n+1$ in the orthogonal case. We write $J$ for $J_k$, and denote its corresponding linear transformation by $\cj$. Then from \er{symp} and \er{orth}, we have that $A$ is symplectic or orthogonal if and only if 
$A^{-1} = J^{-1} A^t J,$
or in terms of the linear transformations on $\bc^k$, 
\begin{equation}\label{inverse}
\ca^{-1} = \cj^{-1} \ca^t \cj,
\end{equation}
where $\ca^t$ denote the linear transformation defined by $A^t$. 

We will prove \er{neat} in the following form
\begin{equation}\label{rewrite}
A_{\bar S \bar T} = A_{\iota(S) \iota(T)}.
\end{equation}
We denote the common cardinality of $S$ and $T$ by $m$ and order their elements according to \er{write S} and \er{write T}. 

As preparations, we compute the following two signs
\begin{align}
e_{\bar S} \wedge e_{S} &=\varepsilon_1\, e_1 \wedge \cdots \wedge e_k, \label{SS}\\
\cj e_S &= \varepsilon_2\, e_{\iota(S)}. \label{JS}
\end{align}

Such calculations are elementary. First we have $e_{\bar S} \wedge e_S = (-1)^{m(k-m)} e_S \wedge e_{\bar S}$. To calculate the sign for $e_S\wedge e_{\bar S}$, we need to move the $e_1, \cdots, e_{s_1-1}$ over the $m$ elements of $e_{s_i}$, and so on. Therefore
\begin{align*}
\ve_1 &= (-1)^{m(k-m) + (s_1 - 1) m + (s_2 -1 - s_1)(m-1) + \cdots + (s_m -1 - s_{m-1})} \\
&= (-1)^{m(k-m) + s_1 + \cdots + s_m - (m + \cdots + 1)} = (-1)^{m(k-m) + \sum_{i=1}^m s_i - \frac{m(m+1)}{2}}.
\end{align*}
Since $\cj e_j = (-1)^{k-j} e_{\iota(j)}$, and we need a re-ordering of $m$ elements in $\iota(S)$, we have 
\begin{align*}
\ve_2 &= (-1)^{ k-s_1 + \cdots + k-s_m + \frac{m(m-1)}{2} }\\
&= (-1)^{ mk - \sum_{i=1}^m s_i + \frac{m(m-1)}{2} }.
\end{align*}
Note that 
\begin{equation}\label{=1}
\ve_1 \ve_2 = 1.
\end{equation}

Now we compute as follows.
\begin{equation}\label{first}
\begin{split}
A_{\bar S \bar T}\, e_1 \wedge \cdots \wedge e_k &= \ve_1 A_{\bar S \bar T}\, e_{\bar S} \wedge e_{S} = \ve_1 (\ca e_{\bar T})\wedge e_S\\
& = \ve_1 \ca( e_{\bar T} \wedge (\ca^{-1} e_S)) = \ve_1 e_{\bar T}\wedge (\ca^{-1} e_S),
\end{split}
\end{equation}
where we have used \er{coeff} and 
\er{top}. 

Using \er{inverse}, we see that the above can be continued as 
\begin{equation}\label{second}
\begin{split}
\ve_1 e_{\bar T} \wedge (\cj^{-1} \ca^t \cj e_S) &= \ve_1 \ve_2 e_{\bar T} \wedge (\cj^{-1} \ca^t e_{\iota(S)})\\
&= A_{\iota(S)\iota(T)} e_{\bar T} \wedge \cj^{-1} e_{\iota(T)}\\
&= (-1)^{(k-1)(k-m)} A_{\iota(S)\iota(T)} e_{\bar T} \wedge \cj e_{\iota(T)},
\end{split}
\end{equation}
by \er{=1}, \er{J-1} and $e_{\iota(T)} \in \wedge^{k-m} \bc^k$. 

Adapting \er{JS} to $\iota(T)$ and \er{SS} to $T$, 
we have 
\begin{align*}
e_{\bar T} \wedge \cj e_{\iota(T)} &= (-1)^{mk - \sum_{i=1}^m (k+1-t_i) + \frac{m(m-1)}{2}} e_{\bar T} \wedge e_{T}\\
&= (-1)^{\sum_{i=1}^m {t_i} + \frac{m(m+1)}{2} }e_{\bar T} \wedge e_T\\
&= (-1)^{m(k-m)} e_1\wedge \cdots \wedge e_k.
\end{align*}

Therefore \er{second} can be continued as  
\begin{equation}\label{last}
\begin{split}
(-1)^{(k-1)(k-m)} A_{\iota(S)\iota(T)} e_{\bar T} \wedge \cj e_{\iota(T)} &= (-1)^{(k+m-1)(k-m)} A_{\iota(S)\iota(T)} e_1\wedge \cdots \wedge e_k\\
&= A_{\iota(S)\iota(T)} e_1\wedge \cdots \wedge e_k.
\end{split}
\end{equation}
The combination of \er{first}, \er{second} and \er{last} gives \er{rewrite}. 

For the converse direction, note that \er{minor} is just \er{rewrite} with $S=\{\iota(s)\}$ and $T=\{\iota(t)\}$. In this case, 
combining Eqs. \er{second}, \er{last},  \er{minor}, and \er{first}, we get 
$$
\ve_1 e_{\bar T} \wedge (\cj^{-1} \ca^t \cj e_S) = \ve_1 e_{\bar T}\wedge (\ca^{-1} e_S)
$$
for all $S=\{\iota(s)\}$ and $T=\{\iota(t)\}$. Therefore 
$$
A^{-1} = J^{-1} A^{t} J.
$$
By definitions \er{symp} and \er{orth}, we see that $A$ is symplectic or orthogonal if it satisfies \er{minor}. 
\end{proof}

\section{Reduction of Toda systems of types $C_n$ and $B_n$ to $A_{k-1}$}\label{red}

It is well-known \cite{L, ALW} that the $C_n$ and $B_n$ Toda systems can be treated as the $A_{2n-1}$ and $A_{2n}$ Toda systems with symmetries. More precisely, we will show in this section that the solutions to Toda systems \er{toda2} of types $C_n$ and $B_n$ correspond to the solutions of Toda systems \er{toda3} of types $A_{2n-1}$ and $A_{2n}$ with the symmetry condition \er{sym for gamma} and with the symmetry requirement \er{sym for U}. 
The symmetry corresponds to the outer automorphism of the Lie algebra $A_{k-1}$, which graphically is represented by the symmetry of its Dynkin diagram. 

In this section, we give no details for the verification of the assertions since they are elementary. The $B_n$ case is slightly more complicated than the $C_n$ case. Note that for the $B_n$ case, $\delta_{i, n}$ denotes the Kronecker delta. 

\begin{lemma}[$C_n$ reduction] The $u_i$ for $1\leq i \leq n$ satisfy \er{toda} for the $C_{n}$ Toda system with parameters $\gamma_i$ for $1\leq i\leq n$ iff the $\t u_i$ for $1\leq i\leq 2n-1$ defined by 
\begin{equation*}\label{reduction}
\t u_i = \t u_{2n-i} = u_i,\quad 1\leq i\leq n,
\end{equation*}
satisfy \er{toda} for the $A_{2n-1}$ Toda system with parameters $\t\gamma_i$ for $1\leq i\leq 2n-1$ defined by 
\begin{equation*}\label{C_n gamma}
\t \gamma_i = \t \gamma_{2n-i} = \gamma_i,\quad 1\leq i\leq n. 
\end{equation*}
\end{lemma}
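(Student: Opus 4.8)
The plan is to prove the equivalence by writing both Toda systems \er{toda} out explicitly from their Cartan matrices \er{Cartan-A} and \er{Cartan-C} and checking, term by term, that imposing the symmetry $\tilde u_i = \tilde u_{2n-i}$ on a solution of the $A_{2n-1}$ system collapses it exactly onto the $C_n$ system, and conversely. The conceptual content is that the substitution $i \mapsto 2n-i$ is the outer automorphism of $A_{2n-1}$ (the reflection symmetry of its Dynkin diagram), whose fixed locus folds the diagram onto that of $C_n$; all verifications are elementary, so the real task is simply to pinpoint where the fold produces the distinguishing $-2$ entry in the last row of the $C_n$ Cartan matrix.

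First I would record the $i$-th equation of the $A_{2n-1}$ system, which by the tridiagonal shape of \er{Cartan-A} reads
\[
\Delta \tilde u_i + 4\bigl(2 e^{\tilde u_i} - e^{\tilde u_{i-1}} - e^{\tilde u_{i+1}}\bigr) = 4\pi \tilde\gamma_i \delta_0,
\]
with the convention that the out-of-range terms $e^{\tilde u_0}$ and $e^{\tilde u_{2n}}$ are dropped. For $1 \leq i \leq n-1$ the substitution $\tilde u_i = u_i$, $\tilde\gamma_i = \gamma_i$ makes this literally the $C_n$ equation, since in that index range the rows of \er{Cartan-A} and \er{Cartan-C} agree. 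The one case that matters is $i = n$: here the symmetry forces $\tilde u_{n+1} = \tilde u_{2n-(n+1)} = \tilde u_{n-1} = u_{n-1}$, so the two neighbors of $\tilde u_n$ coincide and the equation becomes
\[
\Delta u_n + 4\bigl(2 e^{u_n} - 2 e^{u_{n-1}}\bigr) = 4\pi \gamma_n \delta_0,
\]
which is precisely the $C_n$ equation for $i=n$, the coefficient $-2$ arising exactly from the folding of the two equal neighbors. I would then note that the equations for $n+1 \leq i \leq 2n-1$ are redundant: re-indexing $i = 2n-i'$ with $1 \leq i' \leq n-1$ and using the symmetry of both $\tilde u$ and $\tilde\gamma$ returns the equation already imposed at $i = i'$. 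The finite-integral conditions also correspond, since $\int_{\br^2} e^{\tilde u_{2n-i}} = \int_{\br^2} e^{\tilde u_i} = \int_{\br^2} e^{u_i}$, so the $2n-1$ integrability requirements of the $A_{2n-1}$ system are equivalent to the $n$ requirements of the $C_n$ system.

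I expect the only step needing genuine attention to be the $i=n$ equation, where one must verify that the fold $\tilde u_{n+1} = \tilde u_{n-1}$ turns the symmetric $(-1,-1)$ off-diagonal pattern of $A_{2n-1}$ into the single $-2$ of \er{Cartan-C}; everything else is bookkeeping. Since each identity used is an equality, the argument is reversible, yielding both directions of the ``iff'' at once: a symmetric solution of the $A_{2n-1}$ system restricts to a solution of the $C_n$ system via $u_i = \tilde u_i$, and any solution of the $C_n$ system extends to a symmetric solution of the $A_{2n-1}$ system by setting $\tilde u_i = \tilde u_{2n-i} = u_i$.
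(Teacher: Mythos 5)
Your proof is correct, and it supplies exactly the elementary verification that the paper omits (Section~\ref{red} explicitly gives no details ``since they are elementary''): the term-by-term matching of rows, the redundancy of the equations for $n+1\leq i\leq 2n-1$, and the key observation that the fold $\tilde u_{n+1}=\tilde u_{n-1}$ in the $i=n$ equation produces the $-2$ entry of \er{Cartan-C}. Your conceptual framing via the outer automorphism of $A_{2n-1}$ is also precisely the paper's own remark that the symmetry is the reflection of the Dynkin diagram, so this is essentially the same (implicit) approach.
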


\begin{lemma}[$B_n$ reduction] The $u_i$ for $1\leq i \leq n$ satisfy \er{toda} for the $B_{n}$ Toda system with parameters $\gamma_i$ for $1\leq i\leq n$ iff the $\t u_i$ for $1\leq i\leq 2n$ defined by 
\begin{equation*}\label{reduction1}
\t u_i = \t u_{2n+1-i} = u_i + \delta_{i, n} \ln 2,\quad 1\leq i\leq n,
\end{equation*}
satisfy \er{toda} for the $A_{2n}$ Toda system with parameters $\t\gamma_i$ for $1\leq i\leq 2n$ defined by 
\begin{equation*}\label{B_n gamma}
\t \gamma_i = \t \gamma_{2n+1-i} = \gamma_i,\quad 1\leq i\leq n. 
\end{equation*}
\end{lemma}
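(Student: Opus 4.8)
The plan is to verify directly that, under the symmetric extension $\t u_i = \t u_{2n+1-i} = u_i + \delta_{i,n}\ln 2$, the $2n$ equations of the $A_{2n}$ system \er{toda} collapse onto the $n$ equations of the $B_n$ system, and conversely. Since the substitution is an identity at each index, establishing the reduction in one direction establishes the equivalence; the whole content lies in matching the two Cartan matrices \er{Cartan-A} and \er{Cartan-B} after the substitution. No estimate or analytic input is needed, which is why the paper can call the verification elementary.

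First I would record that, by the tridiagonal shape of the $A_{2n}$ Cartan matrix, the $A_{2n}$ equation at index $i$ has the three-term form
\[
\Delta \t u_i + 4\big(2 e^{\t u_i} - e^{\t u_{i-1}} - e^{\t u_{i+1}}\big) = 4\pi \t\gamma_i \delta_0,
\]
with the convention $e^{\t u_0} = e^{\t u_{2n+1}} = 0$. Because the extension is invariant under $i \mapsto 2n+1-i$ by construction and the prescribed $\t\gamma_i$ satisfy $\t\gamma_i = \t\gamma_{2n+1-i}$, the equation indexed by $2n+1-i$ coincides with the one indexed by $i$, so it suffices to treat $1\leq i\leq n$. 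I would also note that the constant shift $\delta_{i,n}\ln 2$ is annihilated by $\Delta$ and contributes no source at the origin, whence $\Delta \t u_n = \Delta u_n$, while $e^{\t u_n} = 2 e^{u_n}$ and $e^{\t u_i} = e^{u_i}$ for $i<n$.

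Next I would substitute these relations into the equations for $1\leq i\leq n$ and check them case by case. For $1\leq i\leq n-2$ the substitution is transparent and reproduces the upper-left (type $A$) rows of \er{Cartan-B} verbatim. The two delicate rows are $i=n-1$, where $e^{\t u_n}=2e^{u_n}$ supplies the coefficient $-2$ matching the entry $a_{n-1,n}=-2$ of \er{Cartan-B}, and $i=n$, where the reflection forces $\t u_{n+1}=\t u_n$ so that the neighbor terms $-e^{\t u_{n-1}}-e^{\t u_{n+1}}$ become $-e^{u_{n-1}} - 2 e^{u_n}$; combining with $2e^{\t u_n}=4e^{u_n}$ leaves exactly $4(2e^{u_n}-e^{u_{n-1}})$, the type $B$ row $a_{n,n-1}=-1$, $a_{n,n}=2$. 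The integral conditions transfer at once, since $e^{\t u_i}$ differs from $e^{u_i}$ only by a harmless positive constant at $i=n$, and the symmetry makes the indices $i>n$ redundant.

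The main point to get right, and the reason the $B_n$ case is subtler than the $C_n$ case, is the treatment of the middle node. In the $C_n$ reduction the fixed node $n$ of the $A_{2n-1}$ diagram acquires its doubled neighbor coefficient purely from the reflection $\t u_{n+1}=\t u_{n-1}$, so no rescaling is needed. For $B_n$ the reflection center of $A_{2n}$ falls between nodes $n$ and $n+1$, so node $n$ is not fixed, and the factor of $2$ distinguishing the short root must be produced analytically; this is precisely the role of the $\ln 2$ shift at $i=n$. I would therefore present the $i=n$ computation in full and remark that it is the only place where the shift is essential, the remaining verifications being routine bookkeeping against \er{Cartan-A} and \er{Cartan-B}.
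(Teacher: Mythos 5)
Your proof is correct and coincides with what the paper intends: the paper states this lemma without proof, remarking only that the verification is elementary and that the $B_n$ case is slightly more complicated than $C_n$, and your row-by-row matching against \er{Cartan-A} and \er{Cartan-B} is exactly that omitted computation. In particular, your treatment of the middle node --- where the reflection of $A_{2n}$ fixes no vertex and the $\ln 2$ shift is what produces the short-root coefficient in rows $n-1$ and $n$ --- correctly identifies the one genuinely non-routine point.
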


Similar reductions can be done in the other version of the Toda systems \er{toda2}, and we can treat the two types in the same way as reductions of $A_{k-1}$ Toda systems. We have $k=2n$ in the $C_n$ case, and $k=2n+1$ in the $B_n$ case. 
Now consider the $A_{k-1}$ Toda system \er{toda2} as
\begin{equation}
\label{toda3}
\begin{cases}
\displaystyle
\Delta \t U_{i} + 4\exp\Big(\sum_{j=1}^{k-1}  a_{ij} \t U_j\Big) = 4\pi \t \alpha_i \delta_0,&1\leq i\leq k-1 \\
\displaystyle
\int_{\br^2} e^{\sum_j a_{ij}\t U_j} < \infty, & 1\leq i\leq k-1, 
\end{cases}
\end{equation}
together with the conditions
\begin{equation}\label{sym for gamma}
\t \gamma_i = \t \gamma_{k-i} = \gamma_i>-1,\quad 1\leq i\leq \hk,
\end{equation}
and the requirement 
\begin{equation}\label{sym for U}
\t U_i = \t U_{k-i},\quad 1\leq i\leq \hk.
\end{equation}

In the $C_n$ case,  under the condition \er{sym for gamma}, the $\t\alpha_i$ as in \er{def alpha} defined by the $\t\gamma_j$ using the inverse Cartan matrix of $A_{2n-1}$ for $1\leq i, j\leq 2n-1$ are related to the 
$\alpha_i$ defined by the $\gamma_j$ using the inverse Cartan matrix of $C_{n}$ for $1\leq i, j\leq n$ by 
\begin{equation}\label{C_n alp}
\t \a_i = \t \a_{2n-i} = \a_i,\quad 1\leq i\leq n. 
\end{equation}
Then the 
$$
U_i := \t U_i, \quad 1\leq i\leq n
$$
satisfy the $C_n$ Toda system \er{toda2} with parameters $\a_i$ for $1\leq i\leq n$ from \er{C_n alp}, and all the solutions to the $C_n$ system are obtained in this way. 

Now the $B_n$ case. Under the condition \er{sym for gamma}, the $\t\a_i$ as in \er{def alpha} defined by the $\t\gamma_j$ using the inverse Cantan matrix of $A_{2n}$ for $1\leq i, j\leq 2n$  are related to the $\alpha_i$ defined by the $\gamma_j$ using the inverse Cartan matrix of $B_{n}$ for $1\leq i, j\leq n$ by 
\begin{equation}\label{B_n alp}
\t\a_i = \t\a_{2n+1-i} = (1+\delta_{i, n})\a_i, \quad 1\leq i\leq n.
\end{equation}
Then the 
\begin{equation}\label{BU}
U_i := \frac{1}{1+\delta_{i, n}}(\t U_i - i\cdot \ln 2), \quad 1\leq i\leq n
\end{equation}
satisfy the $B_n$ Toda system \er{toda2} with parameters $\a_i$
for $1\leq i\leq n$ from \er{B_n alp}, and all solutions to the $B_n$ system are obtained in this way. Note that the negative coefficient vector of $\ln 2$, 
$$
\Big(1, 2, \cdots, n-1, \frac{n}{2}\Big)^t,
$$
is the last column vector of the inverse Cartan matrix of $B_n$ (cf. \er{Cartan-B}). 


\section{Imposing symmetry for solutions}\label{symmetry}

The classification result of \cite{LWY} applies to the system \er{toda3} with the symmetry condition \er{sym for gamma} for the $\t\gamma_i$. 
As expected, the symmetry requirement \er{sym for U} for the solutions enforces some group structure, and we carry this out in this section. 

We follow \cite{LWY} and Section \ref{revisit} to present the solutions of \er{toda3} with \er{sym for gamma}. The integrand functions in \er{integrands} become 
\begin{equation}\label{sym for phi}
\phi_i(z) = \phi_{k-i}(z) = z^{\gamma_i}, \quad \forall\, 1\leq i\leq \Big\lfloor\frac{k}{2}\Big\rfloor.
\end{equation}
Then \er{find xi} becomes $\xi(z)=z^{\t \a_1} = z^{\a_1}$ by \er{C_n alp} and \er{B_n alp}. 
Define $\sigma_i(z)$ by \er{new sigma} and $\nu_i$ by \er{scale} for $0\leq i\leq k-1$. The vector of functions \er{vect} becomes
\begin{equation}\label{new nv}
{\bnu} = (\nu_0, \nu_1, \cdots, \nu_{k-1})^t.
\end{equation}

\begin{remark}\label{repeation} More concretely, in the $C_n$ case the integrand functions and the $\xi(z)$ are 
\begin{gather*}
(\phi_1, \cdots, \phi_{2n-1}) = (z^{\gamma_1}, \cdots, z^{\gamma_n}, \cdots, z^{\gamma_1}),\\
\xi(z) = z^{\gamma_1 + \cdots + \gamma_{n-1} + \frac{1}{2}\gamma_n}.
\end{gather*}
In the $B_n$ case they are
\begin{gather*}
(\phi_1, \cdots, \phi_{2n}) = (z^{\gamma_1}, \cdots, z^{\gamma_n}, z^{\gamma_n}, \cdots, z^{\gamma_1}),\\
\xi(z) = z^{\gamma_1 + \cdots + \gamma_{n-1} + \gamma_n}.
\end{gather*}
\end{remark}

Then the solution to \er{toda3} is defined by \er{use H} which becomes 
\begin{equation}\label{new H}
e^{-\t U_1} = \sum_{i,j=0}^{k-1} h_{ij} \bar{\nu_i} \nu_j,
\end{equation}
where $H=(h_{ij})_{i,j=0}^{k-1}$ is Hermitian and has determinant 1. 

\begin{proposition}\label{main} Formula \er{new H} defines a solution of \er{toda3} with \er{sym for gamma} that satisfies the requirement \er{sym for U} if and only if 
the Hermitian matrix $H$ is symplectic or orthogonal as in \er{symp} or \er{orth}. 
\end{proposition}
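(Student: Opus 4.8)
The plan is to reduce the symmetry requirement \eqref{sym for U} to the minor identities of Proposition \ref{prop-minors} applied to $H$. The entry point is the expression, standard in the integrable-systems theory of \cite{L, N1}, of all the higher fields of the $A_{k-1}$ system \eqref{toda3} through the osculating flag of the curve $\bnu$ from \eqref{new nv}. Writing $\bnu^{(l)}$ for the $l$-th derivative in $z$ and
\[
\Omega_m := \bnu \wedge \bnu' \wedge \cdots \wedge \bnu^{(m-1)} \in \wedge^m \bc^k, \qquad 1 \leq m \leq k-1,
\]
one has $e^{-\t U_m} = \langle \Omega_m, \Omega_m\rangle_{\wedge^m H}$, i.e.
\[
e^{-\t U_m} = \sum_{|S|=|T|=m} \overline{(\Omega_m)_S}\,(\Omega_m)_T\, H_{S,T},
\]
where $(\Omega_m)_S$ is the $e_S$-component of $\Omega_m$ and $H_{S,T}$ is the minor of $H$ on rows $S$ and columns $T$; for $m=1$ this is precisely \eqref{new H}. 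In this language the requirement $\t U_m = \t U_{k-m}$ is a comparison of two such bilinear forms, and the whole argument becomes the bookkeeping of which monomials in $z$ and $\bar z$ occur and how their coefficients match.

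Next I would exploit the palindromic structure produced by \eqref{sym for phi}. Since $\nu_i = \chi_i z^{\beta_i}$ by \eqref{scale} and $\mu_i = \mu_{k-i}$, the exponents obey $\beta_i + \beta_{k-1-i} = \beta_0 + \beta_{k-1} =: P$, and each $(\Omega_m)_S$ is a Wronskian of monomials, hence a real constant $g_S$ times a single power $z^{E_S - \binom{m}{2}}$ with $E_S = \sum_{s\in S}\beta_{s-1}$. Using the reflection $\beta_{s-1}\mapsto P-\beta_{s-1}$ one computes, for the inversion $\iota$ of \eqref{inversion},
\[
E_{\iota(\bar S)} = E_S + \big((k-m)P - \textstyle\sum_{i=0}^{k-1}\beta_i\big),
\qquad g_{\iota(\bar S)} = g_S .
\]
Hence the monomials appearing in $e^{-\t U_{k-m}}$ coincide, up to one overall factor, with those in $e^{-\t U_m}$ under the substitution $(S,T)\mapsto(\iota(\bar S),\iota(\bar T))$, and the symmetry $\t\alpha_m = \t\alpha_{k-m}$ from \eqref{C_n alp} and \eqref{B_n alp} makes that overall factor trivial. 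This is the dictionary translating $\t U_m = \t U_{k-m}$ into a relation between the minors $H_{S,T}$ and $H_{\iota(\bar S),\iota(\bar T)}$.

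With the dictionary in hand the equivalence follows in both directions. For the forward implication it is enough to use the single condition $\t U_1 = \t U_{k-1}$ contained in \eqref{sym for U}: here $S,T$ are singletons, the exponents $\beta_s$ are pairwise distinct because the $\mu_i$ are positive, so equating the coefficient of each monomial $\bar z^{\beta_{s-1}}z^{\beta_{t-1}}$ yields exactly $H_{\{s\},\{t\}} = H_{\iota(\bar{\{s\}}),\iota(\bar{\{t\}})}$, which is condition \eqref{minor} (note $\iota(\bar{\{s\}}) = \overline{\iota(s)}$). The converse half of Proposition \ref{prop-minors} then shows that $H$ is symplectic or orthogonal according to the parity of $k$. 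Conversely, if $H$ is symplectic or orthogonal, Proposition \ref{prop-minors} gives $H_{S,T} = H_{\iota(\bar S),\iota(\bar T)}$ for subsets of every cardinality, and substituting this into the dictionary produces $e^{-\t U_m} = e^{-\t U_{k-m}}$ for all $m$, i.e. the full requirement \eqref{sym for U}.

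The main obstacle is the identity $g_{\iota(\bar S)} = g_S$ relating Wronskian constants of the complementary sizes $m$ and $k-m$. Each $g_S$ is an antisymmetric Vandermonde product in the $\beta_i$ times a product of the $\chi_i$, and the reflection $\beta_i \mapsto P - \beta_i$ reverses every difference $\beta_b - \beta_a$; I must check that the accumulated signs cancel against the reordering sign in passing from $\bar S$ to $\iota(\bar S)$ and against the scaling factors $\chi_i$, and this cancellation is exactly what the product normalization \eqref{prod} (equivalently $\det H = 1$) guarantees. A smaller but necessary point is that at cardinality one the exponents $\beta_s$ are genuinely distinct, so the coefficient comparison there is term-by-term with no collisions among powers of $z$; this is what makes the cardinality-one case of Proposition \ref{prop-minors} precisely the right instrument and lets the forward direction bypass the higher fields entirely.
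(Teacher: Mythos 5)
Your overall architecture is the same as the paper's: expand $e^{-\t U_m}$ by Cauchy--Binet as $\sum_{S,T}\overline{W_{S,\ul{m}}}\,H_{S,T}\,W_{T,\ul{m}}$ (your $(\Omega_m)_S$ is exactly the Wronskian minor $W_{S,\ul{m}}$ of the matrix \er{Wrons}), match coefficients using the distinctness of the exponents $\beta_i$, and invoke Proposition \ref{prop-minors} in both directions --- cardinality one for the ``only if'' part (using only $\t U_1=\t U_{k-1}$), all cardinalities for the ``if'' part. But there is a genuine gap precisely at the step you flag as ``the main obstacle'': the identity $g_{\iota(\bar S)}=g_S$, equivalently $W_{S,\ul{m}}=W_{\iota(\bar S),\ul{k-m}}$, is the paper's Proposition \ref{minors for W}; it is the crux of the whole argument, and you neither prove it nor point at a correct mechanism for it. The justification you offer --- that the cancellation ``is exactly what the product normalization \er{prod} (equivalently $\det H=1$) guarantees'' --- cannot be right: $g_S$ is built from the data $\chi_i,\beta_i$ of the curve $\bnu$ alone, i.e.\ from the Wronskian matrix $W$, and involves $H$ in no way. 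What actually makes the identity true is the palindromic symmetry \er{sym for phi} together with the unit-Wronskian normalization \er{ww1} (which gives $\sum_i\beta_i=\binom{k}{2}$ and, combined with $\t\a_1=\t\a_{k-1}$, the matching of the $z$-exponents). For instance, for $k=4$ and $S=\{2,3\}$ one computes $g_S=1/\bigl(\t\mu_1(\t\mu_1+\t\mu_2)\bigr)$ while $g_{\iota(\bar S)}=g_{\{1,4\}}=1/\bigl(\t\mu_3(\t\mu_2+\t\mu_3)\bigr)$; these agree only because $\t\mu_1=\t\mu_3$, independently of any property of $H$. (A smaller inaccuracy: since $\iota$ reverses the order of indices, the increasing-order differences are preserved, $\beta_{\iota(b)-1}-\beta_{\iota(a)-1}=\beta_{b-1}-\beta_{a-1}$, so the Vandermonde factor of $g_{\iota(\bar S)}$ is literally the Vandermonde of $\bar S$ and no signs arise; the genuinely nontrivial content is the equality of the $\chi$-weighted Vandermonde products over the complementary sets $S$ and $\bar S$.)

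Note also that the gap infects the forward direction, not only the converse: at $m=1$, without knowing $g_{\overline{\iota(j)}}=\chi_{j-1}$, coefficient matching in $\t U_1=\t U_{k-1}$ only yields $H_{ij}=\bar c_i c_j H_{\bar{\iota(i)},\bar{\iota(j)}}$ for unknown constants $c_j$, which is not condition \er{minor}, so Proposition \ref{prop-minors} cannot be applied. The paper closes this hole by structure rather than computation: the pairing identities of Proposition \ref{mine} show that $W^tJW$ has the anti-triangular form \er{pair}, the Gram--Schmidt Lemma \ref{GS} produces a unipotent upper-triangular $U$ with $(WU)^tJ(WU)=J$, the forward half of Proposition \ref{prop-minors} then applies to the symplectic or orthogonal matrix $WU$, and finally minors with initial-segment column sets $\ul{m}$ are unchanged under right multiplication by $U$. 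To complete your proof you must either import this argument or give an actual combinatorial proof of $g_{\iota(\bar S)}=g_S$ from \er{sym for phi} and \er{ww1}; as written, the central identity is asserted with an incorrect justification.
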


We first do some preparations for the proof. Let $W=W(\bnu)$ denote the Wronskian matrix of $\bnu$ with respect to $z$, that is, 
\begin{equation}\label{Wrons}
W = \begin{pmatrix}
\bnu &
\bnu' &
\cdots &
\bnu^{(k-1)}
\end{pmatrix}.
\end{equation}
We know that $W\in SL(k, \bc)$ by \er{ww1}. Now consider the big matrix 
\begin{equation}\label{big S}
R :=  {W}^\dag H {W}
\end{equation}
Then \er{new H} says that 
\begin{equation}\label{U1}
e^{-\t U_1} = R_{1,1}.
\end{equation}
The general theory \cite{L, N1, LWY}  of Toda system \er{toda3} says that in general for $1\leq m\leq k-1$, we have 
\begin{equation}\label{Um}
e^{-\t U_m} = R_{\ul{m}, \ul{m}},
\end{equation}
the first principal minor of $R$ of rank $m$ where $\ul{m}$ denotes the set $\{1, \cdots, m\}$. 

Our way to prove Proposition \ref{main} is by applying Proposition \ref{prop-minors} in both directions. First we want to show that the Wronskian matrix in \er{Wrons} has special properties and, loosely speaking, is half symplectic or orthogonal. 

\begin{proposition}\cite[Prop. 5.13]{N1} \label{mine} Let $\bnu^{(i)}$ denote the $i$th derivative of $\bnu$ \er{new nv}. Then 
\begin{align}
(\bnu^{(i)})^t J \bnu^{(j)} &= 0, \qquad\text{if }i+j<k-1,\label{the zeros}\\
(\bnu^{(i)})^t J \bnu^{(j)} &= (-1)^{i}, \qquad\text{if }i+j=k-1,\label{the ones}
\end{align}
where $J=J_k$ is as in \er{def j}. 
\end{proposition}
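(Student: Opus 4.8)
The plan is to exploit the fact that, in the symmetry-reduced setting, the components of $\bnu$ in \er{new nv} are explicit monomials, so that the whole statement collapses to a single Vandermonde-type identity. Indeed, by \er{new sigma} and \er{scale} we have $\nu_a(z) = \chi_a z^{\beta_a}$ with distinct exponents $\beta_a$ and, using $\sum_{l=j}^{a}\mu_l = \beta_a - \beta_{j-1}$, with $\chi_a = \prod_{m=0}^{a-1}(\beta_a - \beta_m)^{-1}$. The palindromy $\mu_l = \mu_{k-l}$ coming from $\gamma_i = \gamma_{k-i}$ (see \er{sym for phi}) forces $\beta_a + \beta_{k-1-a}$ to be independent of $a$, and the normalization $\det W = 1$ from \er{ww1} (equivalently $\sum_a \beta_a = k(k-1)/2$) pins this constant to $k-1$, so that $\beta_{k-1-a} = (k-1) - \beta_a$. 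Finally, since $J = J_k$ in \er{def j} is anti-diagonal with $J_{a+1,\,k-a} = (-1)^a$, the pairing unwinds to $(\bnu^{(i)})^t J \bnu^{(j)} = \sum_{a=0}^{k-1}(-1)^a \nu_a^{(i)}\nu_{k-1-a}^{(j)}$.

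Substituting the monomials and writing $(x)_m := x(x-1)\cdots(x-m+1)$ for the falling factorial, I note that $\nu_a^{(i)} = \chi_a (\beta_a)_i\, z^{\beta_a - i}$, so each term carries the power $z^{\beta_a + \beta_{k-1-a} - i - j} = z^{(k-1)-i-j}$, the \emph{same} for every $a$. Hence
\begin{equation*}
(\bnu^{(i)})^t J \bnu^{(j)} = z^{(k-1)-i-j}\sum_{a=0}^{k-1} w_a\,(\beta_a)_i\,\big((k-1)-\beta_a\big)_j,
\end{equation*}
where $w_a := (-1)^a\chi_a\chi_{k-1-a}$ and I have replaced $\beta_{k-1-a}$ by $(k-1)-\beta_a$. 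The summand is a polynomial in $\beta_a$ of degree $i+j$, so everything reduces to understanding the moments $\sum_a w_a \beta_a^m$.

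The key computation — and the step I expect to be the main obstacle — is the identity
\begin{equation*}
w_a = \frac{(-1)^{k-1}}{\prod_{b\ne a}(\beta_a - \beta_b)}.
\end{equation*}
To prove it I would split $\prod_{b\ne a} = \prod_{b<a}\cdot\prod_{b>a}$, recognize $\prod_{b<a}(\beta_a - \beta_b) = \chi_a^{-1}$, and then use the reflection $\beta_c = (k-1)-\beta_{k-1-c}$ to rewrite $\beta_a - \beta_b = -(\beta_{k-1-a}-\beta_{k-1-b})$ and thereby $\prod_{b>a}(\beta_a - \beta_b) = (-1)^{k-1-a}\chi_{k-1-a}^{-1}$; reassembling gives the claim, with the constant $(-1)^{k-1}$ coming out uniform in $a$. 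The delicate part is the sign bookkeeping in this reflection step, since it is exactly here that the symmetry of the exponents must be converted into the Vandermonde denominator without leftover $a$-dependence.

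With this identity in hand, the classical divided-difference (Vandermonde kernel) identity $\sum_a \beta_a^m \big/ \prod_{b\ne a}(\beta_a - \beta_b) = 0$ for $0\le m \le k-2$ and $=1$ for $m = k-1$ finishes the proof immediately. For $i + j < k-1$ the summand has degree $< k-1$, so the moment sum vanishes and the prefactor $z^{(k-1)-i-j}$ is irrelevant, giving \er{the zeros}. For $i+j = k-1$ the power of $z$ disappears and only the top-degree coefficient $(-1)^j \beta_a^{k-1}$ of $(\beta_a)_i\big((k-1)-\beta_a\big)_j$ survives, yielding $(-1)^j(-1)^{k-1} = (-1)^i$ and hence \er{the ones}. (Conceptually, the vanishing below the anti-diagonal reflects that the Euler operator annihilating the $\nu_a$ is formally self-adjoint with respect to $J$, but the monomial computation above makes this self-contained.)
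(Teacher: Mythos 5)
Your proof is correct, but it takes a genuinely different route from the paper's: the paper does not prove Proposition \ref{mine} at all — it quotes it from \cite[Prop.~5.13]{N1}, where the statement is established as a differential property of general iterated integrals (arbitrary integrand functions $\phi_i$ subject to the swap symmetry), not just monomials. Your argument instead exploits the fact that in this paper's setting the integrands have already been pinned down to $\phi_i = z^{\gamma_i}$ (see \er{sym for phi}), so that $\nu_a = \chi_a z^{\beta_a}$ and the whole pairing collapses to a Lagrange-interpolation (divided-difference) identity. I checked the delicate steps and they hold: the palindromy $\mu_l = \mu_{k-l}$ together with $\sum_a \beta_a = k(k-1)/2$ (which one can read off directly from \er{exponents}, or deduce from \er{ww1} once the $\beta_a$ are known to be distinct) gives $\beta_{k-1-a} = (k-1)-\beta_a$; the sign bookkeeping $\prod_{b>a}(\beta_a-\beta_b) = (-1)^{k-1-a}\chi_{k-1-a}^{-1}$ is right, so $w_a = (-1)^{k-1}\prod_{b\ne a}(\beta_a-\beta_b)^{-1}$ with the constant indeed uniform in $a$; distinctness of the $\beta_a$, needed for the divided-difference identity, holds because $\mu_l = \gamma_l+1 > 0$; and in the case $i+j=k-1$ the leading coefficient $(-1)^j$ of $(x)_i\big((k-1)-x\big)_j$ yields $(-1)^{k-1}(-1)^j = (-1)^{i+2j} = (-1)^i$, matching \er{the ones}. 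The trade-off between the two approaches: the route via \cite{N1} proves the pairing identities for arbitrary (non-monomial) integrands, which is the natural level of generality for the integrable-systems machinery; your computation is self-contained and elementary, and it suffices for the classification here precisely because Proposition \ref{char exp} has already forced the monomial form \er{integrands} before Proposition \ref{mine} is invoked.
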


\begin{remark} The above proposition is proved in \cite{N1} as some differential property of iterated integrals. In \cite[Prop. 5.13]{N1}, the $n$ is our $k-1$ here, the $F^t$ is our $\bnu$, and the swap function $s$ has no effect in our case by our symmetry condition \er{sym for phi}. 

Furthermore, by differentiating \er{the ones} and by a quick induction, we see that 
\begin{equation}\label{one more}
(\bnu^{(i)})^t J \bnu^{(j)} = 0, \qquad\text{if }i+j=k.
\end{equation}
See \cite[Eqs. (4.11),(4.14)]{N1}.
\end{remark}

Therefore for the Wronskian matrix $W$ \er{Wrons}, we have that 
\begin{equation}\label{pair}
P := W^t J W = \begin{pmatrix}
& & & & 1\\
& & & -1 & 0\\
& & 1 & 0& p_{2, k-1}\\
& \iddots & \iddots&\iddots & \vdots\\
(-1)^{k-1} & 0& p_{k-1, 2} & \cdots &p_{k-1, k-1} \\
\end{pmatrix},
\end{equation}
where $p_{i, j}=(\bnu^{(i)})^t J \bnu^{(j)}$. The $P$ is skew-symmetric or symmetric depending on $k$. 

A Gram-Schmidt procedure can be applied to the column vectors of the Wronskian matrix $W$ to make it symplectic or orthogonal. 

\begin{lemma}\label{GS} There exists a unipotent upper-triangular matrix $U$ such that 
$$
(WU)^t J (WU) = J.
$$  
\end{lemma}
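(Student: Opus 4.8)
The plan is to exploit the structure of the matrix $P = W^t J W$ in \er{pair}, which already shows that $W$ pairs correctly with $J$ along and above the secondary diagonal. Specifically, the goal is to find a unipotent upper-triangular $U$ so that $(WU)^t J (WU) = J$, and since $(WU)^t J (WU) = U^t P U$, this is purely a statement about the matrix $P$: I want to find a unipotent upper-triangular $U$ with $U^t P U = J$. By \er{the zeros} and \er{the ones}, the anti-diagonal entries of $P$ are exactly the entries $\pm 1$ of $J$, and all entries below the secondary anti-diagonal vanish, so $P$ differs from $J$ only in the entries strictly above the secondary diagonal (the $p_{i,j}$ with $i+j > k-1$, using $0$-based indices).

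The key step is a Gram--Schmidt / column-reduction argument carried out in the correct order. Writing $w_0, w_1, \dots, w_{k-1}$ for the columns of $W$ (so $w_i = \bnu^{(i)}$), I will successively modify each column by subtracting a suitable linear combination of the columns to its \emph{left}, which is precisely what multiplication by a unipotent upper-triangular $U$ accomplishes. Because the pairing $\la w_i, w_j\ra := w_i^t J w_j$ vanishes whenever $i+j < k-1$ and equals $(-1)^i$ when $i+j = k-1$, each column $w_i$ has a unique ``dual'' partner $w_{k-1-i}$ against which it pairs nontrivially. I will process the columns so as to zero out the unwanted pairings $p_{i,j}$ one at a time: modifying $w_j$ by $w_j \mapsto w_j - c\, w_{j'}$ for an appropriate earlier index $j'$ changes $\la w_i, w_j\ra$ in a controlled, triangular fashion, and the nonvanishing pairing $\la w_{k-1-i}, w_i\ra = \pm 1$ serves as the pivot that makes each correction coefficient uniquely solvable. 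The upper-triangularity of $U$ guarantees that fixing higher-index pairings does not disturb the already-normalized lower ones, so the procedure terminates with $U^t P U = J$ exactly.

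The main obstacle I expect is bookkeeping: verifying that the elimination can be arranged so that each step only uses columns strictly to the left (guaranteeing $U$ is genuinely upper-triangular and unipotent) while simultaneously preserving both the skew-symmetric/symmetric structure of $P$ and the already-established anti-diagonal entries. The cleanest way to handle this is to note that $P = J + N$ where $N$ is supported strictly above the secondary diagonal, and then to construct $U$ inductively by, at stage measured by the distance $d = (i+j)-(k-1) > 0$ from the secondary diagonal, clearing all entries at that distance using pivots at distance $0$; the triangular nature of the correction ensures the induction closes. The consistency of the signs — that the correction coefficients produced by the skew or symmetric form are compatible and that $U$ remains upper-triangular — follows from \er{one more}, which controls the first off-secondary-diagonal band and anchors the induction. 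Since the construction is explicit and each pivot is a unit $\pm 1$, existence of such a $U$ is immediate once the inductive scheme is set up.
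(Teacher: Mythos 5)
Your proposal is correct and follows essentially the same route as the paper: both perform a Gram--Schmidt-type triangular elimination on $P = W^t J W$, using the unit anti-diagonal pairings from \eqref{the zeros}--\eqref{the ones} as pivots so that every correction subtracts only earlier columns (the paper organizes the induction plane-by-plane from the outside in, you organize it band-by-band by distance from the anti-diagonal, but it is the same elimination). One small inaccuracy: the induction and its signs are anchored by \eqref{the zeros} and \eqref{the ones} themselves, together with the factor $\tfrac{1}{2}$ needed for diagonal entries in the symmetric (odd $k$) case, rather than by \eqref{one more}, which merely says the first band above the anti-diagonal already vanishes.
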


\begin{proof} There are several ways of doing the normalization. One particularly nice way is by the following induction scheme. 

We first want to get a normalized basis for the plane spanned by the first and the last vectors, i.e. by $\bnu$ and $\bnu^{(k-1)}$. 
By \er{the zeros} and \er{the ones}, the only non-normalized quantity is $p_{k-1,k-1} = (\bnu^{(k-1)})^t J \bnu^{(k-1)}$. This number is nonzero only in the orthogonal case, i.e. when $k$ is odd. Therefore $v_0 = \bnu$ and $v_{k-1} = \bnu^{(k-1)} - \frac{p_{k-1,k-1}}{2} \bnu$ make a normalized basis of this plane. 

With respect to this plane, the other vectors may have nonzero pairing $$ (\bnu^{(i)})^t J v_{k-1} = (\bnu^{(i)})^t J \bnu^{(k-1)} =p_{i, k-1}$$ for $1\leq i\leq k-2$ (actually for $2\leq i\leq k-2$ by \er{one more}). Then we define 
$
v_i := \bnu^{(i)} - p_{i,k-1} \bnu
$
such that $(v_i)^t J v_{k-1} = 0$ for $1\leq i\leq k-2$. 

Then we will go to next plane spanned by $v_1$ and $v_{k-2}$ and continue this procedure to normalize the vectors $v_{k-2}$ and $v_2, \cdots, v_{k-3}$. 
Note the sign $(\bnu')^t J \bnu^{(k-2)} = -1$ from \er{the ones} will come into the formulas.

Therefore through induction, we have constructed a unipotent upper-triangular matrix $U$ such that $WU$ is symplectic or orthogonal with respect to $J$.  
\end{proof}


\begin{proposition}\label{minors for W} For any $1\leq m\leq k$ and any subset $S$ of cardinality $m$, we have the equality among the minors for the Wronskian matrix \er{Wrons}
$$
W_{S,\ul{m}} = W_{\iota(\bar S), \ul{k-m}}.
$$
\end{proposition}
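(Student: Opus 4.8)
The plan is to reduce the claim to Proposition \ref{prop-minors} by first replacing $W$ with a genuinely symplectic or orthogonal matrix. Lemma \ref{GS} provides a unipotent upper-triangular matrix $U$ for which $A := WU$ is symplectic or orthogonal with respect to $J$, so that Proposition \ref{prop-minors} applies directly to $A$. The whole argument then rests on transferring the minor identity for $A$ back to $W$, which works precisely because the column indices in the statement are initial segments $\ul{m} = \{1, \dots, m\}$.

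First I would record the elementary fact that right multiplication by a unipotent upper-triangular matrix leaves unchanged every minor whose column set is an initial segment $\ul{m}$. Indeed, since $U$ is upper-triangular with $1$'s on the diagonal, for $j \leq m$ the $j$-th column of $A = WU$ is a linear combination of the first $j \leq m$ columns of $W$. Hence the $k \times m$ block consisting of the first $m$ columns of $A$ equals the corresponding block of $W$ multiplied on the right by the top-left $m \times m$ corner of $U$, which is again unipotent upper-triangular and so has determinant $1$. Restricting to the rows indexed by $S$ and taking determinants gives
\[
A_{S, \ul{m}} = W_{S, \ul{m}}
\]
for every $S$ with $|S| = m$ and every $1 \leq m \leq k$. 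The same identity, now with $m$ replaced by $k-m$ and $S$ replaced by $\iota(\bar S)$ (note $|\iota(\bar S)| = k-m$), yields $A_{\iota(\bar S), \ul{k-m}} = W_{\iota(\bar S), \ul{k-m}}$.

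Next I would apply Proposition \ref{prop-minors} to the symplectic or orthogonal matrix $A$ with the column set $T = \ul{m}$, obtaining
\[
A_{S, \ul{m}} = A_{\iota(\bar S), \iota(\bar{\ul{m}})}.
\]
It remains only to identify $\iota(\bar{\ul{m}})$: since $\bar{\ul{m}} = \{m+1, \dots, k\}$ and $\iota(j) = k+1-j$ by \er{inversion}, one computes $\iota(\bar{\ul{m}}) = \{1, \dots, k-m\} = \ul{k-m}$. Chaining the three identities then gives
\[
W_{S, \ul{m}} = A_{S, \ul{m}} = A_{\iota(\bar S), \ul{k-m}} = W_{\iota(\bar S), \ul{k-m}},
\]
as desired.

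I do not expect a serious obstacle here, since the genuine content — the minor symmetry for actually symplectic or orthogonal matrices — is already contained in Proposition \ref{prop-minors}, while the normalizing matrix $U$ is supplied by Lemma \ref{GS}. The only two points demanding care are the invariance of initial-segment column minors under right multiplication by $U$ (which is exactly why the statement is restricted to column sets $\ul{m}$ rather than arbitrary $T$), and the bookkeeping identity $\iota(\bar{\ul{m}}) = \ul{k-m}$ extracted from the definition \er{inversion} of $\iota$.
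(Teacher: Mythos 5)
Your proposal is correct and follows essentially the same route as the paper's own proof: normalize $W$ to the symplectic or orthogonal matrix $WU$ via Lemma \ref{GS}, apply the minor identity \er{neat} of Proposition \ref{prop-minors} with $T=\ul{m}$, note $\iota(\bar{\ul{m}})=\ul{k-m}$, and observe that right multiplication by the unipotent upper-triangular $U$ (column additions by previous columns) does not change minors whose column sets are initial segments. Your block-matrix justification of that last invariance is just a slightly more explicit version of the paper's one-line argument.
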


\begin{proof} By Lemma \ref{GS}, the matrix $WU$ is symplectic or orthogonal. It is clear from the definition \er{inversion} that 
$$
\iota(\bar {\ul{m}}) = \ul{k-m}.
$$
Therefore \er{neat} gives that 
$$
(WU)_{S,\ul{m}} = (WU)_{\iota(\bar S),\ul{k-m}}.
$$
Since $U$ is a unipotent upper-triangular matrix, $WU$ is obtained from $W$ using column additions by previous columns. 
Therefore we see that  
$$
(WU)_{S,\ul{m}}=W_{S,\ul{m}}\quad\text{and}\quad (WU)_{\iota(\bar S),\ul{k-m}}=W_{\iota(\bar S),\ul{k-m}},
$$
since the column indices $\ul{m}$ and $\ul{k-m}$ are strings of numbers starting from 1. 
\end{proof}

Now we are ready for the following.

\begin{proof}[Proof of Proposition \ref{main}] Actually merely the case of \er{sym for U} when $i=1$ 
$$
e^{-\t U_1} = e^{-\t U_{k-1}}
$$
is sufficient for proving that $H$ is either symplectic or orthogonal. 
By \er{U1} and \er{Um}, we get 
\begin{equation}\label{firstlast}
R_{1, 1} = R_{\ul{k-1}, \ul{k-1}}.
\end{equation}
By \er{big S} and basic properties of determinants, we get 
\begin{gather*}
R_{1, 1} = \sum_{i, j=1}^k \overline{W_{i, 1}} H_{i,j} W_{j, 1},\\
R_{\ul{k-1}, \ul{k-1}} = \sum_{i, j=1}^k \ol{W_{\bar{\iota(i)}, \ul{k-1}}} H_{\bar{\iota(i)}, \bar{\iota(j)}} W_{\bar{\iota(j)}, \ul{k-1}},
\end{gather*}
where the second equation uses the fact that any $S\subset \{1, \cdots, k\}$ with $k-1$ elements can be written as the complement of one element $\iota(i)$ for some $1\leq i\leq k$. 

By Proposition \ref{minors for W} with $m=1$ and $S=\{i\}$ or $\{j\}$ for $1\leq i, j\leq k$, we have 
\begin{equation}\label{eq fns}
\overline{W_{i, 1}}=\ol{W_{\bar{\iota(i)}, \ul{k-1}}}, \quad 
W_{j, 1} = W_{\bar{\iota(j)}, \ul{k-1}}.
\end{equation}
Furthermore, by \er{Wrons}, \er{scale} and \er{new exp}, we have 
\begin{equation}\label{distinct}
W_{j, 1} = \nu_{j-1} = \chi_{j-1}\, z^{\t \mu_1 + \cdots + \t\mu_{j-1} - \a_1}.
\end{equation}
where 
$\t\mu_i = \t\gamma_i + 1> 0$. Therefore the $W_{j, 1}$ have strictly increasing exponents. Similarly this holds for the $\ol{W_{i, 1}}$. Hence the equalities \er{firstlast} and \er{eq fns} mean that the coefficients are equal, that is, 
$$
H_{i, j} = H_{\bar{\iota(i)}, \bar{\iota(j)}}, \quad \forall 1\leq i, j\leq k.
$$
Then \er{minor} in Proposition \ref{prop-minors} implies that $H$ is symplectic when $k$ is even or orthogonal when $k$ is odd. 

Furthermore along the same lines, it is easy to see that as long as $H$ is symplectic or orthogonal, the requirement \er{sym for U} is satisfied for all $i$, that is
$$
e^{-\t U_i} = R_{\ul{i}, \ul{i}} = R_{\ul{k-i}, \ul{k-i}} = e^{-\t U_{k-i}},
$$
by determinant expansion, Propositions \ref{minors for W} and \er{neat} of Proposition \ref{prop-minors}. 
\end{proof}

As mentioned in Section \ref{revisit}, \cite{LWY} shows that the Hermitian $H$ is positive definite and hence has the Cholesky decomposition \er{UL}. Now we show that the corresponding lower-triangular matrix $B$ must also be symplectic or orthogonal according to $H$. 

\begin{proposition}\label{make them} For a Hermitian positive-definite symplectic or orthogonal matrix $H$, consider its Cholesky decomposition $H = B^\dag B$ where $B$ is lower-triangular. 
Let $B=\Lambda C$ be the decomposition of $B$ into its diagonal and unipotent parts with $\Lambda=\on{diag}(\lambda_0, \cdots, \lambda_{k-1})$ where $\lambda_i>0$ and $C$ unipotent. Then $C, \Lambda$ and hence $B$ are all symplectic or orthogonal. 
\end{proposition}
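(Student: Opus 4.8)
The plan is to carry out everything in terms of the \emph{anti-transpose} (or $J$-adjoint) $X^\star := J^{-1}X^tJ$, in which the symplectic/orthogonal condition \er{symp}--\er{orth} reads simply $X^\star=X^{-1}$, by \er{inverse}. Three formal properties of $\star$ do all the work. It is an anti-automorphism, $(XY)^\star=Y^\star X^\star$. Since $J$ reverses the index order $i\mapsto\iota(i)$ up to sign (cf.\ \er{JS}), conjugation $X\mapsto J^{-1}XJ$ is a flip about the anti-diagonal and hence interchanges upper- and lower-triangular matrices; composing with transposition, which also interchanges them, shows that $\star$ \emph{preserves} the class of lower-triangular matrices, with $(X^\star)_{ii}=X_{\iota(i)\iota(i)}$ (the signs cancel). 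In particular $\star$ preserves unipotent lower-triangular matrices, and on a diagonal matrix $\Lambda=\on{diag}(\lambda_0,\dots,\lambda_{k-1})$ it acts by reversal, $\Lambda^\star=\on{diag}(\lambda_{k-1},\dots,\lambda_0)$, so that $\Lambda$ is symplectic/orthogonal exactly when $\lambda_m\lambda_{k-1-m}=1$ for all $m$.

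First I would show that $BB^\star$ is diagonal. Since $H$ is Hermitian, $H^t=\bar H$, so the hypothesis $H^\star=H^{-1}$ becomes $J^{-1}\bar HJ=H^{-1}$. Substituting $H=B^\dagger B$ (so $\bar H=B^t\bar B$) and factoring $\star$ through the product gives $B^\star\,(J^{-1}\bar BJ)=B^{-1}(B^{-1})^\dagger$; moving $B$ to the left and $(J^{-1}\bar BJ)^{-1}=J^{-1}\bar B^{-1}J$ to the right yields
\begin{equation*}
BB^\star=(B^{-1})^\dagger\,J^{-1}\bar B^{-1}J .
\end{equation*}
The left-hand side is a product of two lower-triangular matrices, hence lower-triangular, while the right-hand side is a product of two upper-triangular matrices, hence upper-triangular; therefore $BB^\star=:D$ is diagonal.

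Next I would split off $C$ and then handle $\Lambda$. Writing $B=\Lambda C$ with $\Lambda$ invertible diagonal and $C$ unipotent lower-triangular, we get $D=BB^\star=\Lambda\,(CC^\star)\,\Lambda^\star$, so $CC^\star=\Lambda^{-1}D(\Lambda^\star)^{-1}$ is diagonal; but $C$ and $C^\star$ are both unipotent lower-triangular, so $CC^\star$ is unipotent, and a unipotent diagonal matrix is the identity. Hence $CC^\star=I$, i.e.\ $C$ is symplectic or orthogonal. It remains to prove $\Lambda^\star=\Lambda^{-1}$, i.e.\ $\lambda_m\lambda_{k-1-m}=1$, and this is the one place the minor identities are needed. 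Because $B$ is lower-triangular, the only surviving Cauchy--Binet term shows that the trailing principal minor of $H=B^\dagger B$ on the last $j$ indices equals $\prod_{l=k-j}^{k-1}\lambda_l^2$. On the other hand, applying \er{neat} of Proposition \ref{prop-minors} to the symplectic/orthogonal matrix $H$ with $S=T=\{m+1,\dots,k\}$ identifies the trailing minor of size $k-m$ with the trailing minor of size $m$. Comparing successive ratios of these two equal families of products gives $\lambda_m^2=1/\lambda_{k-1-m}^2$, hence $\lambda_m\lambda_{k-1-m}=1$ (for $k$ odd the fixed middle index forces the central $\lambda$ to be $1$, consistent with $\det B=1$, which follows from $\det H=(\det B)^2=1$). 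Thus $\Lambda$ is symplectic or orthogonal, and since $\Lambda$ and $C$ are, so is their product $B=\Lambda C$, these matrices forming a group.

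The routine but delicate part is the first paragraph's bookkeeping — the explicit signs in $J^{-1}X^tJ$ and the verification that the two sides of the displayed identity really are lower- and upper-triangular. The genuinely substantive step is the last paragraph: triangularity alone controls only the unipotent factor $C$, and Proposition \ref{prop-minors} is what is needed to pin down the diagonal factor $\Lambda$.
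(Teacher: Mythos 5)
Your proof is correct, and it splits into two parts of different character relative to the paper. The first two thirds — deriving that $BB^\star$ is diagonal and extracting $CC^\star=I$ from unipotence — is in substance the paper's own argument in different notation: the paper substitutes $H=B^\dag B$ into $H^tJH=J$ to get $\bar B J \bar B^t=(B^{-1})^tJB^{-1}$ (its Eq.~\er{spell}), which is precisely the complex conjugate of your display $BB^\star=(B^{-1})^\dag J^{-1}\bar B^{-1}J$ up to the factor $(-1)^{k-1}$ from \er{J-1}, and then makes the same triangularity observation to conclude both sides are secondary diagonal. Where you genuinely diverge is the diagonal factor. The paper does \emph{not} need Proposition \ref{prop-minors} here at all: once $C$ is known to be symplectic/orthogonal, substituting $B=\Lambda C$ back into \er{spell} collapses both sides to expressions in $\Lambda$ alone, giving $\Lambda J\Lambda=\Lambda^{-1}J\Lambda^{-1}$, whence $\lambda_i\lambda_{k-1-i}=1$ by positivity. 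This also shows that your closing assessment — that "triangularity alone controls only the unipotent factor $C$" and that the minor identities are \emph{needed} to pin down $\Lambda$ — is mistaken: the identity relates $B$ on one side to $B^{-1}$ on the other, so the diagonal parts enter with opposite exponents and positivity finishes the job. Your alternative ending (Cauchy--Binet on trailing principal minors of $H=B^\dag B$, which survive only on the diagonal block, combined with \er{neat} applied to $S=T=\{m+1,\dots,k\}$, and taking successive ratios) is a valid and somewhat pleasant self-contained computation, but it is heavier machinery than the paper requires, since it invokes Proposition \ref{prop-minors} for minors of all sizes where the paper's proof of this proposition uses none.
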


\begin{proof} Using $H= B^\dag B$ in $H^t J H = J$, we get 
$
B^t \bar B J \bar B^t B = J,
$
which means 
\begin{equation}\label{spell}
\bar B J \bar B^t = (B^{-1})^t J B^{-1}.
\end{equation}
Now $B$ is lower-triangular, and so are $\bar B$ and $B^{-1}$. Also their transposes are upper-triangular. It is easy to see that with respect to the secondary diagonal, the left hand side is lower-triangular while the right is upper-triangular. For the equality to hold, 
we need that both sides are secondary diagonal. 

Plug $B=\Lambda C$ in the right hand side of \er{spell} to get $ \Lambda^{-1} (C^{-1})^t J C^{-1} \Lambda^{-1}$. Since $\Lambda$ is diagonal, $(C^{-1})^t J C^{-1}$ is secondary diagonal and is equal to $J$ since $C$ is unipotent. Therefore $C^{-1}$ and hence $C$ are symplectic or orthogonal. 

Now \er{spell} becomes 
$$
\Lambda J \Lambda = \Lambda^{-1} J \Lambda^{-1}.
$$
This says that $\Lambda^2$ is symplectic or orthogonal and hence $\Lambda$ is. Concretely this means that $\lambda_i \lambda_{k-1-i} = 1$ for $0\leq i\leq k-1$.
\end{proof}

\section{The subgroups parametrizing the solutions}\label{further}

The final piece of information is to study the unipotent subgroup $N$ of lower-triangular matrices in $Sp(2n, \bc)$ or $SO(2n+1, \bc)$. In this section we will first set up a coordinate system on $N$. 
Corresponding to the conditions \er{c=0} and \er{A cond} in the $A$ case, there are some further restrictions on $N$ for the solutions
to be well-defined on $\bc^*$
depending on the $\gamma_i$, 
and we will pin down the restrictions. 

The unipotent subgroup $N$ is simply-connected and diffeomorphic to a complex vector space, in particular to its Lie algebra $\fn$. One can use the so-called coordinates of the second type, and we refer the reader to \cite[p. 76, Cor. 1.126, Thm. 6.46]{Knapp} for more details. Here we would like to use a slight variation to the coordinates of the second type, which is more convenient for our purposes. 

A unipotent lower-triangular matrix, when considered as an element in $SL(k, \bc)$, can be written as 
\begin{equation}\label{concrete N}
C = \begin{pmatrix}
1 & & & & \\
c_{10} & 1 & & & \\
c_{20} & c_{21} & 1 & & \\
\vdots & \vdots & \ddots & \ddots & \\
c_{k-1, 0} & c_{k-1, 1} & \cdots & c_{k-1, k-2} & 1
\end{pmatrix}.
\end{equation}
For $C$ to be symplectic or orthogonal, it satisfies the condition $C^t J_k C = J_k$.

When $k=2n$ is even, $C$ is symplectic and we can choose $c_{ij}$ for $0\leq j<i\leq 2n-1-j, \ 0\leq j\leq n-1$ to be our coordinates on $N$. The dimension is $n^2$. The other $c$'s in $C$ can be solved in term of these by the above condition. 
Similarly when $k=2n+1$ is odd, $C$ is orthogonal and we can still choose $c_{ij}$ for $0\leq j<i\leq 2n-1-j, \ 0\leq j\leq n-1$ to be our coordinates, while the other $c$'s can be solved in term of these. 


In this section, we will formulate our results in a Lie-theoretical way. We first introduce a bit more terminology from Lie theory and refer the reader to, for example, \cite{Knapp, FH}. Let $\fg$ be a complex simple Lie algebra of rank $n$. Let $\tau_1, \dots, \tau_n$ be the 
simple roots, and let $\Delta^+$ denote the set of positive roots of $\fg$. Let $\fg = \fh \oplus \bigoplus_{\tau\in \Delta^+} (\fg_\tau \oplus \fg_{-\tau})$ be the root space decomposition, where $\fh$ is the Cartan subalgebra and $\fg_\tau$ is the root space corresponding to $\tau$ generated by a root vector $e_\tau$. Let $\fn = \bigoplus_{\tau\in \Delta^+}  \fg_{-\tau}$ be the negative nilpotent Lie algebra. Let $G$ be a Lie group corresponding to $\fg$, and $N$ the subgroup corresponding to $\fn$. 

The above coordinates on $N$ in $Sp(2n, \bc)$ and $SO(2n+1, \bc)$ correspond to the roots in the sense that $\frac{\p}{\p c_{ij}} \big|_{c=0} C$ in \er{concrete N} is a root vector in the Lie algebra. For completeness, we list the correspondences. The Cartan subalgebra $\fh$ is chosen to consists of diagonal matrices, and $L_i$ denotes the linear function on $\fh$ taking the $i$th diagonal entry. 

The positive roots for $C_n$ are 
\begin{equation*}\label{C-roots}
\begin{cases}
\tau_i + \cdots + \tau_{j-1}=L_i - L_j, & \text{for }1\leq i < j \leq n,\\
(\tau_i + \cdots \tau_{n-1}) + (\tau_j + \cdots + \tau_n)=L_i + L_j, & \text{for }1\leq i\leq j\leq n.
\end{cases}
\end{equation*}
There are totally $n^2$ positive roots. 
Furthermore, the coordinates $c_{ij}$ in the $C_n$ case correspond to the negative roots as follows. The $c_{ij}$ for $0\leq j<i \leq n-1$ corresponds to the negative of $L_{j+1} - L_{i+1}$, and the $c_{ij}$ for $0\leq j < n\leq i\leq 2n-1-j$ corresponds to the negative of $L_{j+1} + L_{2n-i}$. 

The positive roots for $B_n$ are 
\begin{equation*}\label{B-roots}
\begin{cases}
\tau_i + \cdots + \tau_{j}=L_i - L_{j+1}, & \text{for }1\leq i \leq j \leq n,\\
(\tau_i + \cdots \tau_{n}) + (\tau_j + \cdots + \tau_n)=L_i + L_j, & \text{for }1\leq i< j\leq n.
\end{cases}
\end{equation*}
We note that $L_{n+1}=0$ for $B_n$. 
There are again $n^2$ positive roots. 
Furthermore, the coordinates $c_{ij}$ in the $B_n$ case correspond to the negative roots as follows. The $c_{ij}$ for $0\leq j<i \leq n$ corresponds to the negative of $L_{j+1} - L_{i+1}$, and the $c_{ij}$ for $0\leq j < n < i\leq 2n-1-j$ corresponds to the negative of $L_{j+1} + L_{2n+1-i}$.

Using Proposition \ref{make them}, the formula \er{new H} for the solution to the system \er{toda3} with \er{sym for gamma} and \er{sym for U} becomes 
\begin{equation}\label{con form}
e^{-\t U_1} = \bnu^\dag B^\dag B \bnu =  | \Lambda C \bnu |^2 = \sum_{i=0}^{k-1} \lambda_i^2 \Big| \nu_i + \sum_{j=0}^{i-1} c_{ij} \nu_j \Big|^2,
\end{equation}
where the diagonal matrix is $\Lambda=\on{diag}(\lambda_0, \cdots, \lambda_{k-1})$ satisfying $\lambda_i \lambda_{k-1-i} = 1$, and the unipotent matrix $C$ has coordinates as above. 

We now study the conditions such that \er{con form} is well-defined on $\bc^*$. Let $\Gamma=(\gamma_1, \cdots, \gamma_n)$ be the $n$-tuple with entries $\gamma_i>-1$ from \er{toda}. We define a subalgebra $\fn_\Gamma$ and a subgroup $N_\Gamma$ as follows. 

\begin{definition}\label{in roots} For $\tau\in \Delta^+$, write $\tau = \sum_{i=1}^n m_i \tau_i$ in terms of the simple roots. Define the number $\tau(\Gamma) = \sum_{i=1}^n m_i \gamma_i$ where we replace $\tau_i$ by $\gamma_i$. 

Define the subset $\Delta_\Gamma$ of $\Delta^+$ as 
$
\Delta_\Gamma = \{ \tau\in \Delta^+ \,|\, \tau(\Gamma)\in \bz\}.
$
Note that $\Delta_\Gamma$ is closed under addition, that is, if $\a, \beta\in \Delta_\Gamma$ and $\a + \beta\in \Delta^+$, then $\a + \beta \in \Delta_\Gamma$. 

Define the Lie subalgebra $\fn_\Gamma$ of $\fn$ as 
$
\fn_\Gamma = \bigoplus_{\tau \in \Delta_\Gamma} \fg_{-\tau},
$
and the subgroup $N_\Gamma$ of $N$ as the corresponding Lie subgroup, 
$
N_\Gamma = \exp(\fn_\Gamma).
$
\end{definition}

Concretely for the $C_n$ and $B_n$ cases, $N_\Gamma$ consists of matrices \er{concrete N} where we let a coordinate $c_{ij} = 0$ if the corresponding root does not belong to $\Delta_\Gamma$. 

Inspired by a discussion with Ming Xu on conditions \er{c=0} and \er{A cond} in the $A$ case, we give the subgroup $N_\Gamma$ the following more intrinsic interpretation. There are grading elements $E_j\in \fh$ in the Cartan subalgebra such that $\tau_i (E_j) = \delta_{ij}$, where $\tau_i\in \fh^*$ is regarded as a linear function on $\fh$. Consider the following element in the Cartan subgroup 
\begin{equation}\label{cent}
g_\Gamma := \exp\Big(2\pi i\sum_{j=1}^n \gamma_j E_j\Big) = \exp\Big(2\pi i\sum_{j=1}^n \alpha_j H_j\Big) ,
\end{equation}
where $H_j = [e_{\tau_j}, e_{-\tau_j}]$ and $\tau_j(H_j)= 2$. 
The number $\tau(\Gamma)$ in Definition \ref{in roots} is seen to be $\tau(\Gamma) = \tau(\sum_j \gamma_j E_j)$, and so $\on{Ad}_{g_\Gamma} e_\tau = \exp(2\pi i \tau(\Gamma)) e_\tau$. Hence we see that 
\begin{equation}\label{intrinsic} 
\fn_\Gamma = \fn^{\on{Ad}_{g_\Gamma}}, \quad 
N_\Gamma = N^{\on{Ad}_{g_\Gamma}}
\end{equation}
are the fixed point sets of the adjoint actions by $g_\Gamma$. 

\begin{proposition}\label{NGm} For the $e^{-\t U_1}$ in \er{con form} to be well-defined on $\bc^*$, the matrix $C$ in \er{concrete N} must belong to the subgroup $N_\Gamma$. 
\end{proposition}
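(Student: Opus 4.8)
The plan is to read off the monodromy of the multivalued building blocks $\nu_l$ around $z=0$ and to convert single-valuedness of $e^{-\t U_1}$ into an adjoint-invariance of the unipotent Cholesky factor $C$. By \eqref{scale} and \eqref{distinct} each $\nu_l=\chi_l\,z^{\beta_l}$ with \emph{real}, strictly increasing exponents $\beta_0<\beta_1<\cdots<\beta_{k-1}$, so the loop $z\mapsto e^{2\pi\sqrt{-1}}z$ sends $\bnu\mapsto M\bnu$, where $M=\on{diag}(e^{2\pi\sqrt{-1}\beta_0},\dots,e^{2\pi\sqrt{-1}\beta_{k-1}})$ is diagonal and, as the $\beta_l$ are real, unitary. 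Writing $\beta_l=\alpha_l-\alpha_{l+1}+l$ as in \eqref{exponents} and comparing with the torus element $g_\Gamma$ of \eqref{cent} acting in the standard representation — the vector attached to $\nu_l$ has weight $L_{l+1}$, and the $C_n,B_n$ exponents are those of the symmetrized $A_{k-1}$ system of Section \ref{red} — a routine weight computation gives $\on{Ad}_M=\on{Ad}_{g_\Gamma}^{-1}$ on $\fn$, i.e. $M$ and $g_\Gamma^{-1}$ agree modulo the center.

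Next I would characterize single-valuedness. By \eqref{con form} and \eqref{UL} we have $e^{-\t U_1}=\bnu^\dag H\bnu$ with $H=B^\dag B=C^\dag\Lambda^2 C$. Because the $\beta_l$ are strictly increasing, the monomials $\overline{\nu_i}\,\nu_j$ have pairwise distinct radial--angular profiles $r^{\beta_i+\beta_j}e^{\sqrt{-1}(\beta_j-\beta_i)\theta}$ on the universal cover of $\bc^*$, hence are linearly independent, and no cancellation between distinct pairs $(i,j)$ is possible. Therefore $e^{-\t U_1}$ descends to a single-valued function on $\bc^*$ if and only if it is invariant under the monodromy, that is, if and only if $M^\dag H M=H$.

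Finally I would run a Cholesky-uniqueness argument to pass from $H$ to $C$. Conjugation by the diagonal unitary $M$ preserves the unipotent lower-triangular shape, so $C'':=M^\dag C M$ is again unipotent lower-triangular, commutes with the real diagonal $\Lambda$, and $M^\dag H M=(\Lambda C'')^\dag(\Lambda C'')$. Since $H$ is positive definite (shown in \cite{LWY}), its Cholesky decomposition with lower-triangular factor of positive diagonal is unique, so $M^\dag H M=H$ forces $\Lambda C''=\Lambda C$, i.e. $MC=CM$. As $\on{Ad}_M$ and $\on{Ad}_{g_\Gamma}$ have the same fixed points in $N$, this says exactly $C\in N^{\on{Ad}_{g_\Gamma}}=N_\Gamma$ by \eqref{intrinsic}. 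In particular well-definedness forces $C\in N_\Gamma$, which is the assertion; since the argument is an equivalence, together with Proposition \ref{make them} it yields the parametrization of the solution space by $AN_\Gamma$.

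The main obstacle is the passage from the second step to the third: single-valuedness is a statement about the \emph{function} $e^{-\t U_1}$, and one must be sure it transfers to the coefficient identity $M^\dag HM=H$ (this is precisely the strict monotonicity of the $\beta_l$, which rules out accidental cancellations) and then be converted into the \emph{group} condition $MC=CM$ on the unipotent factor, rather than merely into a condition on $H$. This last conversion is where positive-definiteness and the uniqueness of the Cholesky decomposition are indispensable; the bookkeeping identifying $M$ with $g_\Gamma^{-1}$ modulo center in the symmetrized $C_n,B_n$ setting is a further, more routine, point to check.
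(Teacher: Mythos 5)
Your proposal is correct and follows essentially the same route as the paper's own proof: the paper likewise converts well-definedness on $\bc^*$ into invariance under the monodromy $z\mapsto e^{-2\pi i}z$ (whose matrix is $g_\Gamma$, the inverse of your $M$), uses the strictly increasing exponents as in \eqref{distinct} to upgrade the function identity to $g_\Gamma^\dag B^\dag B\, g_\Gamma = B^\dag B$, and then invokes uniqueness of the Cholesky decomposition to get $g_\Gamma^\dag B g_\Gamma = B$, hence $C\in N_\Gamma$ by \eqref{intrinsic}. The only cosmetic differences are that you conjugate $C$ rather than $B=\Lambda C$ and phrase the monodromy via $M=g_\Gamma^{-1}$ (and your aside that $C''$ ``commutes with $\Lambda$'' should read that $M$ commutes with $\Lambda$, which is what the computation actually uses); the substance is identical.
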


\begin{proof}
By \er{c=0} in Theorem \ref{A case}, we see that the coefficient $c_{ij} = 0$ if the $\t \gamma_{j+1} + \cdots + \t\gamma_i\notin \bz$. It can be seen that such information is exactly encoded in the root structure of the subgroup $N_\Gamma$. 

We can also argue using \er{intrinsic} as follows. For \er{con form} to be well-defined on $\bc^*$, we need it to be invariant under the change of $z\mapsto e^{-2\pi i} z$. Using \er{exponents}, \er{C_n alp}, \er{B_n alp}, and \er{cent}, it can be seen that under this change, $\bnu \mapsto g_\Gamma \bnu$. 
Therefore the solution \er{con form} becomes 
$
\bnu^\dag g_\Gamma^\dag B^\dag B g_\Gamma \bnu.
$

Arguing as in \er{distinct}, we have 
$$B^\dag B = g_\Gamma^\dag B^\dag B g_\Gamma =  (g_\Gamma^\dag B g_\Gamma)^\dag (g_\Gamma^\dag B g_\Gamma).$$ 
Since $g_\Gamma^\dag B g_\Gamma$ is still lower-triangular with positive diagonal entries, by the uniqueness of the Cholesky decomposition \cite{GvL}, we see that 
$$
g_\Gamma^\dag B g_\Gamma = B. 
$$
Using $B=\Lambda C$, we see that $g_\Gamma^{-1} C g_\Gamma =  C$ and $C\in N_\Gamma$ by \er{intrinsic}. 
\end{proof}

\appendix
\section{The example of $C_3$}
In the two appendices, we work out the examples of $C_3$ and $B_2$ Toda systems to demonstrate our results. 

We first consider the $C_3$ case where we set out to solve the system \er{toda2} where the Cartan matrix is \er{Cartan-C} for $n=3$. 

The $\alpha$ in \er{def alpha} are defined in terms of the $\gamma$ by  
$$
\begin{pmatrix}
\alpha_1 \\ \alpha_2 \\ \alpha_3
\end{pmatrix} 
=
\begin{pmatrix}
1&1&\frac{1}{2}\\
1&2&1\\
1&2&\frac{3}{2}
\end{pmatrix}
\begin{pmatrix}
\gamma_1 \\ \gamma_2 \\ \gamma_3
\end{pmatrix},
$$
where the matrix is the inverse Cartan matrix of $C_3$. 
With $\mu_i = \gamma_i + 1$ for $1\leq i\leq 3$ and by Remark \ref{repeation} and \er{iteint}, the $\bnu$ has the following shape 
\begin{multline}
\bnu = \frac{1}{z^{\gamma_1+\gamma_2+\frac{\gamma_3}{2}}}\Big(1, \frac{z^{\mu_1}}{\mu_1}, \frac{z^{\mu_1 + \mu_2}}{\mu_2(\mu_1 + \mu_2)}, \frac{z^{\mu_1 + \mu_2 + \mu_3}}{\mu_3(\mu_2 + \mu_3)(\mu_1+ \mu_2 + \mu_3)},\\ 
\frac{z^{\mu_1 + 2\mu_2 + \mu_3}}{\mu_2(\mu_2 + \mu_3)(2\mu_2 + \mu_3)(\mu_1+ 2\mu_2 + \mu_3)}, \\
\frac{z^{2\mu_1 + 2\mu_2 + \mu_3}}{\mu_1(\mu_2 + \mu_1)(\mu_1+ \mu_2 + \mu_3)(\mu_1+ 2\mu_2 + \mu_3)(2\mu_1+ 2\mu_2 + \mu_3)}\Big)^t.
\end{multline}

By Proposition \ref{make them}, the $\Lambda$ and $C$ have the following shapes:
\begin{gather*}
\Lambda = \on{diag}(\lambda_1, \lambda_2, \lambda_3, \lambda_3^{-1}, \lambda_2^{-1}, \lambda_1^{-1}),\quad \lambda_i>0,\\
C = \begin{pmatrix}
1 & & & & & \\
c_{10} & 1 & & & & \\
c_{20} & c_{21} & 1 & & & \\
c_{30} & c_{31} & c_{32} & 1 & & \\
c_{40} & c_{41} & c_{42} & c_{43} & 1 & \\
c_{50} & c_{51} & c_{52} & c_{53} & c_{54}& 1
\end{pmatrix}
\end{gather*}
where the $c_{51}, c_{52}, c_{42}, c_{53}, c_{43}, c_{54}$ are easily solved in terms of the others by the condition that 
$
C^t J_6 C = J_6. 
$
The results are 
\begin{gather*}
c_{{51}}=c_{{10}}c_{{41}}-c_{{20}}c_{{31}}+c_{{30}}c_{{21}}-c_{{40}},\\
c_{{42}}=c_{{21}}c_{{32}}-c_{{31}},\quad c_{{52}}=c_{{10}}c_{{21}}c_{{32}}-c_{{10}}c_{{31}}-c_{{20}
}c_{{32}}+c_{{30}},\\
c_{{43}}=c_{{21}},\quad c_{{53}}=c_{{10}}c_{{21}}-c_{{20}},\quad c_{{54}}=c_{{10}}.
\end{gather*}

The element in \er{cent} is 
\begin{align*}
g_\Gamma &= \exp\big(2\pi i \on{diag}(\a_1, \a_2-\a_1, \a_3-\a_2, \a_2-\a_3,  \a_1-\a_2, -\a_1)\big)
\end{align*}
and the $C$ is required by Proposition \ref{NGm} to satisfy that 
$$
C g_\Gamma = g_\Gamma C. 
$$

This restricts some of the free parameters to zero depending on the integrability of the $\gamma$. 
More concretely, we have the following correspondence between the coordinates and the roots
\medskip
\begin{center}
\begin{tabular}{cccccccc}
\hline
coord's & $c_{10}$ & $c_{21}$ & $c_{32}$ & $c_{20}$ & $c_{31}$ & $c_{30}$ & $c_{41}$  \\
roots & $\tau_1$ & $\tau_2$ & $\tau_3$ & $\tau_1 + \tau_2$ & $\tau_2 + \tau_3$ & $\tau_1 + \tau_2 + \tau_3$ & $2\tau_2 + \tau_3$  \\
\hline
& & & & & & $c_{40}$ & $c_{50}$  \\
& & & & & & $\tau_1 + 2\tau_2 + \tau_3$ & $2\tau_1 + 2\tau_2 + \tau_3$ \\
\cline{7-8}
\end{tabular}
\end{center}
\medskip
A coordinate must be zero if for the corresponding root $\tau\in \Delta^+$, its value $\tau(\Gamma)$ with the $\tau_i$ replaced by $\gamma_i$ is not an integer. 

For example, when $\gamma_1=-0.5, \gamma_2=0.25, \gamma_3=1$, the roots with integral values are $\tau_3$ and $\tau_1 + 2\tau_2 + \tau_3$ and the nonzero coordinates are $c_{32}$ and $c_{40}$. 

\section{The example of $B_2$}
For completeness, we also show the solutions to the $B_2$ Toda system \er{toda2} as the smallest example of type $B$. Of course $B_2$ is isomorphic to $C_2$, and our result can be compared with those in \cite{ALW} where it was the $C_2$ case that was treated. 

In this case, the $\alpha$ are defined in terms of the $\gamma$ by  
$$
\begin{pmatrix}
\alpha_1 \\ \alpha_2 
\end{pmatrix} 
=
\begin{pmatrix}
1&1\\
\frac{1}{2} & 1
\end{pmatrix}
\begin{pmatrix}
\gamma_1 \\ \gamma_2 
\end{pmatrix},
$$
where the matrix is the inverse Cartan matrix of $B_2$. 
With $\mu_i = \gamma_i + 1$ for $1\leq i\leq 2$ and by Remark \ref{repeation} and \er{iteint}, the $\bnu$ has the following shape
\begin{multline}
\bnu = \frac{1}{z^{\gamma_1 + \gamma_2}}\Big(1, \frac{z^{\mu_1}}{\mu_1}, \frac{z^{\mu_1 + \mu_2}}{\mu_2(\mu_1 + \mu_2)}, \frac{z^{\mu_1 + 2\mu_2 }}{2\mu_2^2(\mu_1 + 2\mu_2)},
\frac{z^{2\mu_1 + 2\mu_2}}{2\mu_1(\mu_1 + \mu_2)^2(\mu_1 + 2\mu_2)}\Big)^t.
\end{multline}

By Proposition \ref{make them}, our $\Lambda$ and $C$ have the following shapes:
\begin{gather*}
\Lambda = \on{diag}(\lambda_1, \lambda_2, 1, \lambda_2^{-1}, \lambda_1^{-1}),\quad \lambda_i>0,\\
C = \begin{pmatrix}
1 & & & & \\
c_{10} & 1 & & & \\
c_{20} & c_{21} & 1 & & \\
c_{30} & c_{31} & c_{32} & 1 & \\
c_{40} & c_{41} & c_{42} & c_{43} & 1\\
\end{pmatrix}
\end{gather*}
where the $c_{40}, c_{31}, c_{41}, c_{32}, c_{42}, c_{43}$ are easily solved in terms of the others by the condition that 
$
N^t J_5 N = J_5.
$
The results are 
\begin{gather*}
c_{{40}}=c_{{10}}c_{{30}}-\frac{1}{2}c_{{20}}^{2},\\
c_{{31}}=\frac{1}{2}c_{{21}}^{2},\quad c_{{41}}=\frac{1}{2} c_{{10}}c_{{2
1}}^{2}-c_{{20}}c_{{21}}+c_{{30}}\\
c_{{32}}=c_{{21}},\quad c_{{42}}=c_{{10}}c_{{21}}-c_
{{20}},\quad c_{{43}}=c_{{10}}
\end{gather*}

The element in \er{cent} is 
\begin{align*}
g_\Gamma &= \exp\big(2\pi i \on{diag}(\a_1, 2\a_2-\a_1, 0, \a_1-2\a_2, -\a_1)\big)
\\ &=  \exp\big(2\pi i \on{diag}(\gamma_1+\gamma_2, \gamma_2, 0, -\gamma_2, -\gamma_1-\gamma_2)\big),
\end{align*}
and the $C$ is required to satisfy that 
$$
C g_\Gamma = g_\Gamma C.
$$

This restricts some of the free parameters to zero depending on the if the integrability of the $\gamma$. 
More concretely, we have the following correspondence between the coordinates and the roots
\medskip
\begin{center}
\begin{tabular}{ccccc}
\hline
coord's & $c_{10}$ & $c_{21}$ & $c_{20}$ & $c_{30}$  \\
roots & $\tau_1$ & $\tau_2$ & $\tau_1 + \tau_2$ & $\tau_1 + 2\tau_2$ \\
\hline
\end{tabular}
\end{center}
\medskip
A coordinate must be zero if for the corresponding roots $\tau\in \Delta^+$, its value $\tau(\Gamma)$ with the $\tau_i$ replaced by $\gamma_i$ is not an integer. 

For example, when $\gamma_1=-0.5$ and $\gamma_2=0.25$, the only root with integral value is $\tau_1 + 2\tau_2$ and the nonzero coordinate is $c_{30}$. 

These give the formula \er{con form}, and the solution $U_1$ to the $B_2$ Toda system is $U_1 = \t U_1 - \ln 2$ by \er{BU}.



\begin{bibdiv}
\begin{biblist}

\bib{ALW}{article}{
   author={Ao, Weiwei},
   author={Lin, Chang-Shou},
   author={Wei, Juncheng},
   title={On Non-topological Solutions of the $A_2$ and $B_2$ Chern-Simons System},
   status = {to appear in Memoirs of the A.M.S.},
   year={2013}
}

\bib{ALW2}{article}{
   author={Ao, Weiwei},
   author={Lin, Chang-Shou},
   author={Wei, Juncheng},
   title={On Toda system with Cartan matrix $G_2$},
   journal={Proc. Amer. Math. Soc.},
   volume={143},
   date={2015},
   number={8},
   pages={3525--3536},
}

\bib{Feher}{article}{
   author={Balog, J.},
   author={Feh{\'e}r, L.},
   author={O'Raifeartaigh, L.},
   author={Forg{\'a}cs, P.},
   author={Wipf, A.},
   title={Toda theory and $\scr W$-algebra from a gauged WZNW point of view},
   journal={Ann. Physics},
   volume={203},
   date={1990},
   number={1},
   pages={76--136},
   issn={0003-4916},
}

\bib{BM}{article}{
   author={Brezis, H.},
   author={Merle, F.},
   title={Uniform estimates and blow-up behavior for solutions of $-\Delta u=V(x)e^u$ in two dimensions},
   journal={Comm. Partial Differential Equations},
   volume={16},
   date={1991},
   number={8-9},
   pages={1223--1253},
   issn={0360-5302},
}

\bib{CI}{article}{
   author={Chae, Dongho},
   author={Imanuvilov, Oleg Yu.},
   title={The existence of non-topological multivortex solutions in the
   relativistic self-dual Chern-Simons theory},
   journal={Comm. Math. Phys.},
   volume={215},
   date={2000},
   number={1},
   pages={119--142},
   issn={0010-3616},
}

\bib{CL}{article}{
   author={Chen, Wen Xiong},
   author={Li, Congming},
   title={Classification of solutions of some nonlinear elliptic equations},
   journal={Duke Math. J.},
   volume={63},
   date={1991},
   number={3},
   pages={615--622},
   issn={0012-7094},
}

\bib{FH}{book}{
   author={Fulton, William},
   author={Harris, Joe},
   title={Representation theory},
   series={Graduate Texts in Mathematics},
   volume={129},
   note={A first course;
   Readings in Mathematics},
   publisher={Springer-Verlag},
   place={New York},
   date={1991},
   pages={xvi+551},
   isbn={0-387-97527-6},
   isbn={0-387-97495-4},
}

\bib{GNN}{article}{
   author={Gidas, B.},
   author={Ni, Wei Ming},
   author={Nirenberg, L.},
   title={Symmetry of positive solutions of nonlinear elliptic equations in
   ${\bf R}^{n}$},
   conference={
      title={Mathematical analysis and applications, Part A},
   },
   book={
      series={Adv. in Math. Suppl. Stud.},
      volume={7},
      publisher={Academic Press, New York-London},
   },
   date={1981},
   pages={369--402},
}

\bib{GvL}{book}{
   author={Golub, Gene H.},
   author={Van Loan, Charles F.},
   title={Matrix computations},
   series={Johns Hopkins Studies in the Mathematical Sciences},
   edition={3},
   publisher={Johns Hopkins University Press, Baltimore, MD},
   date={1996},
   pages={xxx+698},
}

\bib{Helgason}{book}{
   author={Helgason, Sigurdur},
   title={Differential geometry, Lie groups, and symmetric spaces},
   series={Pure and Applied Mathematics},
   volume={80},
   publisher={Academic Press, Inc. 
   New York-London},
   date={1978},
   pages={xv+628},
   isbn={0-12-338460-5},
}

\bib{JW}{article}{
   author={Jost, J{\"u}rgen},
   author={Wang, Guofang},
   title={Classification of solutions of a Toda system in ${\Bbb R}^2$},
   journal={Int. Math. Res. Not.},
   date={2002},
   number={6},
   pages={277--290},
   issn={1073-7928},
}

\bib{Knapp}{book}{
   author={Knapp, Anthony W.},
   title={Lie groups beyond an introduction},
   series={Progress in Mathematics},
   volume={140},
   edition={2},
   publisher={Birkh\"auser Boston, Inc., Boston, MA},
   date={2002},
   pages={xviii+812},
   isbn={0-8176-4259-5},
}

\bib{L}{article}{
   author={Leznov, A. N.},
   title={On complete integrability of a nonlinear system of partial
   differential equations in two-dimensional space},
   journal={Teoret. Mat. Fiz.},
   volume={42},
   date={1980},
   number={3},
   pages={343--349},
   issn={0564-6162},
}

\bib{LS}{article}{
   author={Leznov, A. N.},
   author={Saveliev, M. V.},
   title={Representation of zero curvature for the system of nonlinear
   partial differential equations $x_{\alpha ,z\bar z}={\rm
   exp}(kx)_{\alpha }$ and its integrability},
   journal={Lett. Math. Phys.},
   volume={3},
   date={1979},
   number={6},
   pages={489--494},
   issn={0377-9017},
}

\bib{LS-book}{book}{
   author={Leznov, A. N.},
   author={Saveliev, M. V.},
   title={Group-theoretical methods for integration of nonlinear dynamical
   systems},
   series={Progress in Physics},
   volume={15},
   note={Translated and revised from the Russian;
   Translated by D. A. Leuites},
   publisher={Birkh\"auser Verlag},
   place={Basel},
   date={1992},
   pages={xviii+290},
   isbn={3-7643-2615-8},
}

\bib{LWY}{article}{
   author={Lin, Chang-Shou},
   author={Wei, Juncheng},
   author={Ye, Dong},
   title={Classification and nondegeneracy of $SU(n+1)$ Toda system with
   singular sources},
   journal={Invent. Math.},
   volume={190},
   date={2012},
   number={1},
   pages={169--207},
   issn={0020-9910},
}

\bib{N1}{article}{
   author={Nie, Zhaohu},
   title={Solving Toda field theories and related algebraic and differential
   properties},
   journal={J. Geom. Phys.},
   volume={62},
   date={2012},
   number={12},
   pages={2424--2442},
   issn={0393-0440},
}

\bib{N2}{article}{
   author={Nie, Zhaohu},
   title={On characteristic integrals of Toda field theories},
   journal={J. Nonlinear Math. Phys.},
   volume={21},
   date={2014},
   number={1},
   pages={120--131},
   issn={1402-9251},
}

\bib{Yang}{book}{
   author={Yang, Yisong},
   title={Solitons in field theory and nonlinear analysis},
   series={Springer Monographs in Mathematics},
   publisher={Springer-Verlag, New York},
   date={2001},
   pages={xxiv+553},
   isbn={0-387-95242-X},
}

\end{biblist}
\end{bibdiv}

\bigskip
\end{document}